\theoremstyle{definition}
\newtheorem{df}{Definition}[section]
\newtheorem{nota}[df]{Notation}
\theoremstyle{plain}
\newtheorem{thm}[df]{Theorem}
\newtheorem{prop}[df]{Proposition}
\newtheorem{lem}[df]{Lemma}
\newtheorem{cor}[df]{Corollary}
\newcommand{\BR}{{\mathbb R}}
\newcommand{\BZ}{{\mathbb Z}}
\newcommand{\Shom}{{\mathscr{H}\hspace{-1mm}om}}
\newcommand{\Hom}{{\rm Hom}}
\newcommand{\eu}{{\rm eu}}
\newcommand{\pt}{{\rm pt}}
\newcommand{\Db}{{{\sf D}^{\rm b}}}
\newcommand{\Dbc}{{{\sf D}^{\rm b}_{\rm cc}}}
\newcommand{\DbR}{{{\sf D}^{\rm b}_{\BR\text{-c}}}}
\newcommand{\Dbf}{{{\sf D}^{\rm b}_f}}
\newcommand{\bR}{{{\rm R}}}
\newcommand{\Le}{{\rm Le}}
\newcommand{\id}{{\rm id}}
\newcommand{\TK}{{\rm TK}}
\newcommand{\tr}{{\rm tr}}
\newcommand{\Supp}{{\rm Supp}}
\newcommand{\SMH}{{\mathscr{M \hspace{-1mm} H}}}
\newcommand{\MH}{{\mathbb{MH}}}
\newcommand{\tf}{{\widetilde{f}}}
\newcommand{\inv}{{\otimes -1}}
\renewcommand{\SS}{{{\rm SS}}}
\def\d{{\delta}}
\def\o{{\omega}}
\def\D{{\Delta}}
\def\G{{\Gamma}}
\def\L{{\Lambda}}
\title{Microlocal Lefschetz classes of graph trace kernels}
\author{Yuichi \textsc{Ike}}
\begin{document}

\maketitle

\begin{abstract}
In this paper, we define the notion of graph trace kernels 
as a generalization of trace kernels.
We associate a microlocal Lefschetz class with a graph trace kernel and prove 
that this class is functorial with respect to the composition of kernels.
We apply graph trace kernels to the microlocal Lefschetz fixed point
formula for constructible sheaves.
\end{abstract}

\tableofcontents

\section{Introduction}
In \cite{KS14}, Kashiwara and Schapira introduced the notion of trace 
kernels and the method to associate a microlocal Euler class with a trace kernel.

Let $X$ be a $C^\infty$-manifold and $k$ be a field.
We denote by $\o_X$ the dualizing complex on $X$, that is, $\o_X \simeq 
{\rm or}_X[d_X]$ where ${\rm or}_X$ is the orientation sheaf on $X$ and 
$d_X$ is the dimension of $X$.
Denote by $k_{\D_X}$ and $\o_{\D_X}$ the direct image of $k_X$ and 
$\o_X$ respectively under the diagonal embedding $\d \colon X \hookrightarrow X \times X$.
Let $\pi \colon T^*X \to X$ be the cotangent bundle of $X$.

A \textit{trace kernel} on $X$ is a triplet $(K,u,v)$ where $K$ is an object of the derived category of sheaves $\Db(k_{X \times X})$ and $u,v$ are morphisms 
\begin{align}
u \colon k_{\D_X} \longrightarrow K,\;
v \colon K \longrightarrow \o_{\D_X}.
\end{align}
One can naturally define the microlocal Euler class $\mu \eu(K,u,v)$ as an element of $H^0_\L(T^*X;\mu hom(k_{\D_X},\o_{\D_X})) \simeq H^0_\L(T^*X;\pi^{-1}\o_X)$, where $\L=\SS(K) \cap T^*_{\D_X}(X \times X)$.
Kashiwara and Schapira proved the functoriality of the microlocal Euler classes: the 
microlocal Euler class of the composition $K_1 \circ K_2$ of two trace 
kernels is the composition of the microlocal Euler classes of $K_1$ and 
$K_2$ \cite[Theorem 6.3]{KS14}.
\smallskip

On the other hand, microlocal Lefschetz classes of elliptic pairs 
(Guillermou \cite{Gu96}) and Lefschetz cycles of constructible 
sheaves (Matsui-Takeuchi \cite{MT10}) 
were introduced in order to prove the microlocal fixed point formula. 
For elliptic pairs, see Schapira-Schneiders \cite{ScSn94}.
For recent results on this subject, see also \cite{IMT15} and \cite{RTT13}.

Let us recall the notion of Lefschetz cycles defined in \cite{MT10}.
Let $X$ be a real analytic manifold and $\phi \colon X \to X$ be a morphism of manifolds.
We denote by $\DbR(k_X)$ the bounded derived category of 
$\BR$-constructible sheaves on $X$.
Denote by $\o_{\G_\phi}$ the direct image of $\o_X$ under the graph map 
$\d_\phi \colon X \hookrightarrow X \times X , x \mapsto (x,\phi(x))$.
With a pair $(F,\Phi)$ of $F \in \DbR(k_X)$ and $\Phi \in \Hom(\phi^{-1}F,F)$, one 
can associate a cohomology class $ \mu\Le(F,\Phi,\phi) \in 
H^0_\L(T^*X;\mu_{\D_X}(\o_{\G_\phi}))$, where $\L:=\SS(F) \cap 
T^*_{\D_X}(X \times X) \cap T^*_{\G_\phi}(X \times X)$.
This class is called the Lefschetz cycle or the \textit{microlocal Lefschetz 
class} of the pair $(F,\Phi)$.

The microlocal Lefschetz classes of $\BR$-constructible sheaves can be treated in the same way as the microlocal Euler classes of trace kernels.
Define ${\rm D}_X F:=\bR \Shom(F,\o_X)$, the dual of $F$.
The pair $(F,\Phi)$ gives natural morphisms 
\begin{align}\label{mor:constr}
k_{\D_X} \longrightarrow 
{\rm D}_X F \boxtimes F 
\longrightarrow \o_{\G_\phi}.
\end{align}
The composition of the above morphisms defines a cohomology class in \linebreak
$H^0_\L(T^*X;\mu hom(k_{\D_X},\o_{\G_\phi})) \simeq H^0_\L(T^*X;\mu_{\D_X}(\o_{\G_\phi}))$ and this class coincides 
with $\mu\Le(F,\Phi,\phi)$.
\smallskip

In this paper, we extend the notion of trace kernels so that we can treat fixed point formulas.
Then we associate a microlocal Lefschetz class with such a kernel and prove the functoriality of the class.

For a $C^\infty$-manifold $X$ and a morphism of manifolds $\phi \colon X 
\to X$, a \textit{$\phi$-graph trace kernel} is a triplet $(K,u,v)$ where $K \in \Db(k_{X \times X})$ and $u,v$ are morphisms 
\begin{align}
u \colon k_{\D_X} \longrightarrow K,\;
v \colon K \longrightarrow \o_{\G_\phi}.
\end{align}
One defines the microlocal Lefschetz class $\mu\Le(K,u,v,\phi)$ as an 
element of \linebreak $H^0_\L(T^*X;\mu hom(k_{\D_X},\o_{\G_\phi}))$, where 
$\L:=\SS(K) \cap T^*_{\D_X}(X \times X) \cap T^*_{\G_\phi}(X \times X)$.
By \eqref{mor:constr}, a pair $(F,\Phi)$ of $F \in \DbR(X)$ and $\Phi \in \Hom(\phi^{-1}F,F)$ defines a $\phi$-graph trace kernel.

Our main result is the functoriality of microlocal Lefschetz classes: 
the microlocal Lefschetz class of the composition $K_1 \circ K_2$ of two 
graph trace kernels is the composition of microlocal Lefschetz classes 
of $K_1$ and $K_2$ (for a more precise
statement, see  Theorem \ref{thm:gtkcom}).
As an application, we prove the microlocal Lefschetz fixed point 
formula for constructible sheaves.
\smallskip

Finally, let us explain the difference between our construction and that 
of \cite{KS14}.
In the last section of \cite{KS14}, the authors have remarked that trace 
kernels can be adapted to the Lefschetz fixed point formula for 
constructible sheaves.
However, their construction is not suitable to prove the 
functoriality of the cohomology classes.
Therefore, we extend the notion of trace kernels itself and prove the 
functoriality by using the new framework.

\section{Preliminaries}

\subsection{Review on sheaves}
In this paper, all manifolds are assumed to be real manifolds of class $C^\infty$.
Throughout this paper, let $k$ be a field of characteristic zero.
We follow the notation of \cite{KS90}.

Let $X$ be a manifold.
We denote by $\pi_X \colon T^*X \to X$ its cotangent bundle.
If there is no risk of confusion, we simply write $\pi$ instead of $\pi_X$.
For a submanifold $M$ of $X$, we denote by $T^*_M X$ the conormal bundle to $M$.
In particular, $T^*_X X$ denotes the zero-section of $T^*X$.
We also denote by $a \colon T^*X \to T^*X$ the antipodal map defined by $(x;\xi) \mapsto (x;-\xi)$.
A set $\Lambda \subset T^*X$ is said to be conic if it is invariant by the action of $\BR^+$ on $T^*X$.

Let $f \colon X \to Y$ be a morphism of manifolds.
With $f$ one associates the maps 
\[
\xymatrix{
T^*X \ar[rd]_-{\pi_X} & X \times_Y T^*Y \ar[l]_-{f_d} \ar[r]^-{f_\pi} \ar[d]_-\pi 
& T^*Y \ar[d]^-{\pi_Y}\\
& X \ar[r]^-f& Y.
}
\]

We denote by $k_X$ the constant sheaf on $X$ with stalk $k$ and by $\Db(k_X)$ the bounded derived category of sheaves of $k$-vector spaces on $X$.
One can define Grothendieck's six operations $\bR f_*, f^{-1}, \bR f_!, f^!, \overset{{\rm L}}{\otimes}, \bR\Shom$ as functors of derived categories of sheaves.
Since the functor $\cdot\underset{k_X}{\otimes}\cdot$ is exact, we simply write $\otimes$ instead of $\overset{{\rm L}}{\otimes}$.
One denotes by $\o_X$ the dualizing complex on $X$.
That is, if
$\mathbf{a}_X \colon X \to \pt$ 
denotes the natural map,
set $\o_X:=\mathbf{a}_X^! \: k_{\pt}$.
One also denotes by 
$\o_X^{\otimes -1}:=\bR\Shom(\o_X, k_X)$ the dual of $\o_X$.
More generally, for a morphism $f \colon X \to Y$, we denote by $\o_{X/Y}:=f^!k_Y \simeq \o_X \otimes f^{-1}\o_Y^\inv$ the relative dualizing complex.
Note that $\o_X \simeq {\rm or}_X[d_X]$, where ${\rm or}_X$ is the orientation sheaf on $X$ and $d_X$ is the dimension of $X$.
Recall that there is a natural morphism of functors 
\begin{align}\label{mor:upsh}
\o_{X/Y}\otimes f^{-1}(\cdot) \to f^!(\cdot).
\end{align}
We define the duality functor by 
\begin{align}
{\rm D}_X F:=\bR\Shom(F,\o_X).
\end{align}

For $F \in \Db(k_X)$, we denote by $\Supp(F)$ the support of $F$ and by $\SS(F)$ its micro-support, a closed conic involutive subset of $T^*X$.

For a closed submanifold $M$ of $X$, one denotes by $\mu_M \colon 
\Db(k_X) \to {\sf D}^{\rm b}_{\BR^+}(k_{T^*_M X})$ Sato's microlocalization functor along $M$, where ${\sf D}^{\rm b}_{\BR^+}(k_{T^*_M X})$ is the full subcategory of $\Db(k_{T^*_M X})$ consisting of $\BR^+$-conic objects.
We shall use the functor $\mu hom$ defined in \cite{KS90}.
For $F_1,F_2 \in \Db(k_X)$, one defines the bifunctor 
\begin{align*}
\mu hom  \colon   \Db(k_X)^{\rm op} \times \Db(k_X) 
\to {\sf D}^{\rm b}_{\BR^+}(k_{T^* X}) \\
 \mu hom (F_1,F_2):=\mu_\D \bR\Shom(q_2^{-1}F_1,q_1^!F_2),
\end{align*}
where $q_1$ and $q_2$ are the first and second projections from $X \times X$ and $\D$ is the diagonal.
Note that the support of $\mu hom(F_1,F_2)$ satisfies 
\begin{align}
\Supp(\mu hom (F_1,F_2)) \subset {\rm SS}(F_1) \cap {\rm SS}(F_2).
\end{align}
Furthermore, we have the isomorphism 
\begin{align}
\bR \pi_* \mu hom (F_1,F_2) 
\simeq \mu hom (F_1,F_2)|_X \simeq \bR\Shom(F_1,F_2).
\end{align}

\subsection{Compositions of kernels}
We follow the notation of \cite{KS14}.
The results in this subsection are the same as in Section 3 of \cite{KS14}.
For the convenience of the readers, we give proofs of these results here.

\begin{nota} 
\begin{enumerate}
\item For a manifold $X$, we denote by $\d \colon X \hookrightarrow X \times X$ the 
      diagonal embedding and by $\D_X$ the diagonal set of $X \times X$.
\item Let $X_i \: (i=1, 2, 3)$ be manifolds.
For short, we write $X_{ij}:=X_i \times X_j$, $X_{123}:=X_1 \times X_2 \times X_3$, $X_{11223}:=X_1 \times X_1 \times X_2 \times X_2 \times X_3$, etc.
\item Let $\phi_i:X_i \to X_i \; (i=1, 2, 3)$ be morphisms of manifolds.
We write $\phi_{ij}:=\phi_i \times \phi_j:X_{ij} \to X_{ij}$.
\item For simplicity, we shall write $k_i$ instead of $k_{X_i}$ and $\o_i$ instead of $\o_{X_i}$, etc.
We also write $k_{\D_i}$ instead of $k_{\D_{X_i}}$.
\item We denote by $\pi_i$ or $\pi_{ij}$, etc. the projection $T^*X_i \to X_i$ or $T^*X_{ij} \to X_{ij}$, etc.
\item We use the same symbol $q_i$ for the projections $X_{ij} \to X_i$ and $X_{123} \to X_i$. 
We also denote by $q_{ij}$ the projection $X_{123} \to X_{ij}$, 
by $p_i$ the projection $T^*X_{ij} \to T^*X_i$, and by $p_{ij}$ the projection $T^*X_{123} \to T^*X_{ij}$.
\item We denote by $p_{j^a}$ (resp. $p_{ij^a}$) the composition of $p_j$ (resp. $p_{ij}$) and the antipodal map on $T^*X_j$.
\item We denote by $\d_2$ the diagonal embedding $X_{123} \to X_{1223}$.
\end{enumerate}
\end{nota}

Recall the operations of composition of kernels defined in \cite{KS14}.
\begin{df}[\cite{KS14}]
We define the operations of composition of kernels as follows:
\begin{align*}
\underset{2}{\circ} \colon \Db(k_{12}) \times \Db(k_{23}) \to \Db(k_{13}) \\
(K_{12},K_{23}) \mapsto K_{12} \underset{2}{\circ} K_{23} &:= 
\bR {q_{13}}_!\,(q_{12}^{-1}K_{12}\otimes q_{23}^{-1}K_{23}) \\
& \;\simeq \bR {q_{13}}_! \, \d_2^{-1}(K_{12} \boxtimes K_{23}),\\
\underset{2}{*} \colon \Db(k_{12}) \times \Db(k_{23}) \to \Db(k_{13}) \\
(K_{12},K_{23}) \mapsto K_{12} \underset{2}{*} K_{23} &:= 
\bR {q_{13}}_* \, (\o^\inv_{X_{123}/X_{1223}} \otimes \d_2^!(K_{12} 
 \boxtimes K_{23})).
\end{align*}
\end{df}
By \eqref{mor:upsh}, we have a natural morphism $\d_2^{-1}(\cdot) \to \o^\inv_{X_{123}/X_{1223}} \otimes \d_2^!(\cdot)$.
Combining this with the morphism $\bR {q_{13}}_! \to \bR {q_{13}}_*$, we obtain a natural morphism
\begin{align}
K_{12} \underset{2}{\circ} K_{23} \to K_{12} \underset{2}{*} K_{23}.
\end{align}
This is an isomorphism if $p_{12^a}^{-1}(\SS(K_{12})) \cap p_{23}^{-1}(\SS(K_{23}))$ is proper over $T^*X_{13}$.

We define the composition of kernels on cotangent bundles.
\begin{df}[\cite{KS14}]
For kernels on cotangent bundles, we define the composition of kernels as follows:
\begin{align*}
\overset{a}{\underset{2}{\circ}} \colon 
\Db(k_{T^*X_{12}}) \times \Db(k_{T^*X_{23}}) 
& \to \Db(k_{T^*X_{13}}) \\
(K_{12},K_{23}) 
& \mapsto 
K_{12} \overset{a}{\underset{2}{\circ}} K_{23} 
:= 
\bR {p_{13}}_! 
(p_{12^a}^{-1}K_{12} \otimes 
p_{23}^{-1}K_{23}).
\end{align*}
\end{df}

We also define the corresponding operations 
for subsets of cotangent bundles.
Let $A \subset T^*X_{12}$ 
and $B \subset T^*X_{23}$.
We set 
\begin{align*}
A \overset{a}{\underset{2}{\times}} B 
& :=p_{12^a}^{-1}(A) \cap 
p_{23}^{-1}(B),\\
A \overset{a}{\underset{2}{\circ}} B
& :=p_{13}(A \overset{a}{\underset{2}{\times}} B).
\end{align*}

In order to define a composition morphism, we need the following lemma.
Let $X,Y,S$ be manifolds.
Let $q_X \colon X \to S$ and $q_Y \colon Y \to S$ be morphisms.
Assume that 
\begin{align}
X \times_S Y \text{ is a submanifold of }
X \times Y.
\end{align}
Let $j$ be an embedding $X \times_S Y \hookrightarrow X \times Y$.
Noticing that $(X \times_S Y) \times_{(X \times Y)} T^*(X \times Y) 
\simeq T^*X \times_S T^*Y$, we have the following morphisms:
\begin{align}
T^*(X \times_S Y) \overset{j_d}{\longleftarrow} T^*X \times_S T^*Y
\overset{j_\pi}{\longrightarrow} T^*X \times T^*Y.
\end{align}

\begin{lem}\label{lem:compker}{\rm (cf.\ \cite[Proposition 4.4.8]{KS90})}
 For $F_1, G_1 \in \Db(k_X)$ and $F_2,G_2 \in \Db(k_Y)$, 
there is a canonical morphism
{\small \begin{align}
\bR {j_d}_! 
(\mu hom(G_1,F_1)\underset{S}{\boxtimes} \mu hom(G_2,F_2))
\to
\mu hom(j^!(G_1 \boxtimes G_2)
\otimes \o^{\otimes -1}_{X \times_S Y / X \times Y},
F_1 \underset{S}{\boxtimes} F_2).
\end{align}}
\end{lem}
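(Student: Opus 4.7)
The plan is to construct the morphism in three stages: first a K\"unneth-type comparison for $\mu hom$ on $X\times Y$, then pullback via $j_\pi$ and proper push-forward via $j_d$, and finally an application of the microlocal functoriality of $\mu hom$ under the closed embedding $j$, with the relative dualizing complex $\o^{\otimes -1}_{X\times_S Y/X\times Y}$ appearing through the natural transformation \eqref{mor:upsh} that mediates between $j^{-1}$ and $j^!$.

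For the first stage, I would construct the canonical morphism
\[
\mu hom(G_1,F_1)\boxtimes\mu hom(G_2,F_2)\longrightarrow\mu hom(G_1\boxtimes G_2,\,F_1\boxtimes F_2)
\]
on $T^*(X\times Y)\simeq T^*X\times T^*Y$. Unwinding the definition $\mu hom(A,B)=\mu_\D\bR\Shom(q_2^{-1}A,q_1^!B)$, this reduces to the standard compatibility of $\bR\Shom$ with exterior products, the compatibility of $q_2^{-1}$ and $q_1^!$ with $\boxtimes$, and the identification $\D_{X\times Y}\simeq\D_X\times\D_Y$ (after reordering factors), under which Sato's microlocalization along $\D_{X\times Y}$ factors through the microlocalizations on the two sides.

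For the second stage, one uses $F_1\underset{S}{\boxtimes}F_2\simeq j^{-1}(F_1\boxtimes F_2)$ together with the definitional identity
\[
\mu hom(G_1,F_1)\underset{S}{\boxtimes}\mu hom(G_2,F_2)\simeq j_\pi^{-1}\bigl(\mu hom(G_1,F_1)\boxtimes\mu hom(G_2,F_2)\bigr),
\]
valid since the source of $j_\pi$ is $T^*X\times_S T^*Y$. Applying $\bR{j_d}_!\,j_\pi^{-1}$ to the K\"unneth morphism then yields a morphism whose source is the desired left-hand side and whose target is $\bR{j_d}_!\,j_\pi^{-1}\mu hom(G_1\boxtimes G_2,F_1\boxtimes F_2)$. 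The third stage is a general comparison
\[
\bR{j_d}_!\,j_\pi^{-1}\mu hom(A,B)\longrightarrow\mu hom\bigl(j^!A\otimes\o^{\otimes -1}_{X\times_S Y/X\times Y},\,j^{-1}B\bigr)
\]
for any $A,B\in\Db(k_{X\times Y})$, obtained from the adjunction $(j^{-1},\bR j_*)$ inside $\bR\Shom$, the morphism \eqref{mor:upsh}, and the compatibility of microlocalization with the closed embedding $j$. Specialising to $A=G_1\boxtimes G_2$ and $B=F_1\boxtimes F_2$ and composing with the stage-two map produces the morphism claimed in the lemma.

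The main obstacle is the third stage: one must track how $\mu_\D$ interacts with the six operations applied to $j$ and pin down the precise twist by $\o^{\otimes -1}_{X\times_S Y/X\times Y}$ with the correct shift and sign. This is essentially the content of \cite[Proposition 4.4.8]{KS90} in the microlocal language, and I would follow that reference, specialising its general closed-embedding assertion to our setting before concatenating with the K\"unneth morphism from stage one.
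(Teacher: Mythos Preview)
Your proposal is correct and follows essentially the same route as the paper: first the K\"unneth morphism for $\mu hom$ (the paper phrases this as the case $S=\pt$ and obtains it via the comparison $\mu_M(F)\boxtimes\mu_N(G)\to\mu_{M\times N}(F\boxtimes G)$ of \cite[Proposition~4.3.6]{KS90}), then the general closed-embedding functoriality $\bR{j_d}_!\,j_\pi^{-1}\mu hom(G,F)\to\mu hom(j^!G\otimes\o^{\otimes -1}_{X\times_S Y/X\times Y},\,j^{-1}F)$. For the latter the paper invokes \cite[Proposition~4.4.7]{KS90} directly rather than 4.4.8, so you need not reconstruct that step from adjunction and \eqref{mor:upsh} by hand.
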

\begin{proof}
First, we construct the morphism when $S=\pt$.
By using the morphism $\mu_M(F) \boxtimes \mu_N(G) \to \mu_{M 
 \times N}(F \boxtimes G)$ \cite[Proposition 4.3.6]{KS90}, 
we obtain a chain of morphisms 
\begin{align*}
& \quad \ \ \mu_{\D_X}\bR\Shom({q_X}_2^{-1}G_1,{q_X}_1^{!}F_1) 
\boxtimes \mu_{\D_Y}\bR\Shom({q_Y}_2^{-1}G_2,{q_Y}_1^{!}F_2) \\
& \to \mu_{\D_X \times \D_Y}(\bR\Shom({q_X}_2^{-1}G_1,{q_X}_1^{!}F_1) 
\boxtimes \bR\Shom({q_Y}_2^{-1}G_2,{q_Y}_1^{!}F_2)) \\
& \to \mu_{\D_X \times \D_Y}(\bR\Shom({q_X}_2^{-1}G_1 \boxtimes {q_Y}_2^{-1}G_2
,{q_X}_1^{!}F_1 \boxtimes {q_Y}_1^{!}F_2) \\
& \, \simeq \mu_{\D_{X \times Y}}\bR\Shom(q_2^{-1}(G_1 \boxtimes G_2),
q_1^!(F_1 \boxtimes F_2)).
\end{align*}

Next, we treat the general case.
Using the morphism  
\begin{align}
\bR {j_d}_! j_\pi^{-1} \mu hom(G,F) 
\to \mu hom(j^! G \otimes \o_{X \times_S Y/X \times Y}^\inv,j^{-1}F) 
\end{align}
from \cite[Proposition 4.4.7]{KS90},
we obtain a chain of morphisms
\begin{align*}
& \quad \ \: \bR {j_d}_! j_\pi^{-1}(\mu hom(G_1,F_1)\boxtimes \mu hom(G_2,F_2)) \\
& \to
 \bR {j_d}_! j_\pi^{-1} \mu hom(G_1 \boxtimes G_2,F_1 \boxtimes F_2) \\
& \to 
\mu hom(j^! (G_1 \boxtimes G_2) \otimes \o_{X \times_S Y/X \times Y}^\inv,
j^{-1}(F_1 \boxtimes F_2)).
\end{align*}
\end{proof}

\begin{prop}[\cite{KS14}]\label{prop:compker}
For $G_1,F_1 \in \Db(k_{12})$ and $G_2,F_2 \in \Db(k_{23})$, there is a composition morphism:
\begin{align}
\mu hom(G_1,F_1) \overset{a}{\underset{2}{\circ}} \mu hom(G_2,F_2)
\to \mu hom(G_1 \underset{2}{*} G_2,F_1 \underset{2}{\circ} F_2).
\end{align}
\end{prop}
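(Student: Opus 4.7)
The plan is to apply Lemma \ref{lem:compker} with $X = X_{12}$, $Y = X_{23}$, $S = X_2$, so that $X \times_S Y = X_{123}$ is embedded in $X_{1223}$ by $j = \d_2$, and then to propagate the resulting morphism from $T^*X_{123}$ down to $T^*X_{13}$.

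In the first step, Lemma \ref{lem:compker} produces a canonical morphism on $T^*X_{123}$:
\begin{align*}
\bR {\d_{2,d}}_!\bigl(\mu hom(G_1, F_1) \underset{X_2}{\boxtimes} \mu hom(G_2, F_2)\bigr)
\to \mu hom\bigl(\d_2^!(G_1 \boxtimes G_2) \otimes \o^{\inv}_{X_{123}/X_{1223}},\ \d_2^{-1}(F_1 \boxtimes F_2)\bigr).
\end{align*}
The two arguments of $\mu hom$ on the right-hand side are precisely the sheaves on $X_{123}$ which, after applying $\bR {q_{13}}_*$ and $\bR {q_{13}}_!$ along $q_{13} \colon X_{123} \to X_{13}$, produce $G_1 \underset{2}{*} G_2$ and $F_1 \underset{2}{\circ} F_2$ respectively.

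In the second step, I would push this morphism along $p_{13} \colon T^*X_{123} \to T^*X_{13}$. On the right-hand side, the functoriality of $\mu hom$ for the direct image along the smooth submersion $q_{13}$ (a variant of \cite[Proposition 4.4.5]{KS90}) yields a morphism into $\mu hom(G_1 \underset{2}{*} G_2,\ F_1 \underset{2}{\circ} F_2)$. On the left-hand side, one identifies $\bR {p_{13}}_! \bR {\d_{2,d}}_!$ applied to the external product over $X_2$ with the composition $\mu hom(G_1, F_1) \overset{a}{\underset{2}{\circ}} \mu hom(G_2, F_2) = \bR {p_{13}}_!(p_{12^a}^{-1}(-) \otimes p_{23}^{-1}(-))$, using the compatibility of $\d_{2,d}$ with the antipodal map on $T^*X_2$ and the conicness of $\mu hom$.

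The main obstacle is the last identification on the left-hand side: one must carefully track how $\d_{2,d}$, which adds two $T^*_{x_2}X_2$ components, the antipodal baked into $p_{12^a}$, and the orientation shift coming from $\o^{\inv}_{X_{123}/X_{1223}}$ interlock so as to reproduce the definition of $\overset{a}{\underset{2}{\circ}}$. Once this bookkeeping is in place, the composition morphism follows formally from Lemma \ref{lem:compker} combined with the standard functoriality of $\mu hom$ under direct image.
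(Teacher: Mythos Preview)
Your plan matches the paper's proof: apply Lemma~\ref{lem:compker} with $X=X_{12}$, $Y=X_{23}$, $S=X_2$, then push forward along $q_{13}$, then identify the source with the kernel composition $\overset{a}{\underset{2}{\circ}}$.

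The one point that needs correcting is in your second step. The functoriality of $\mu hom$ under direct image along $q_{13}$ is not implemented by $\bR {p_{13}}_!$ on $T^*X_{123}$; it is the morphism
\[
\bR {q_{13,\pi}}_! \, q_{13,d}^{-1}\, \mu hom(G,F)\ \longrightarrow\ \mu hom(\bR {q_{13}}_* G,\ \bR {q_{13}}_! F)
\]
of \cite[Proposition~4.4.7]{KS90} (rather than 4.4.5). The two operations are genuinely different: $q_{13,d}^{-1}$ restricts to the zero section in the $T^*X_2$-direction before $q_{13,\pi}$ integrates out $X_2$, whereas $p_{13}$ integrates out all of $T^*X_2$. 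So you must apply $\bR {q_{13,\pi}}_! q_{13,d}^{-1}$ to both sides of the morphism produced by Lemma~\ref{lem:compker}. The left-hand side then becomes
\[
\bR {q_{13,\pi}}_! \, q_{13,d}^{-1}\, \bR {j_d}_! \, j_\pi^{-1}\bigl(\mu hom(G_1,F_1)\boxtimes \mu hom(G_2,F_2)\bigr),
\]
and a short base-change argument (the commutative square in the paper, using that $q_{13,d}^{-1}\circ j_d$ corresponds to restricting to the antidiagonal $T^*_{\D_2}X_{22}$) identifies this composite of functors with $\bR {p_{13}}_!\,(p_{12^a},p_{23})^{-1}$. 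That is precisely the bookkeeping you anticipate in your last paragraph; once this substitution is made, your sketch is the paper's proof.
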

\begin{proof}
We shall apply Lemma \ref{lem:compker} for $X_{12} \to X_2$ and $X_{23} \to X_2$.
In this case, $X_{12} \times_{X_2} X_{23} \simeq X_{123}$ and $j$ is the diagonal embedding $X_{123} \hookrightarrow X_{1223}$.
Consider the following commutative diagram: 
\[
\xymatrix{
T^*X_{12} \times T^*X_{23} & T^*X_1 \times T^*X_2 \times T^*X_3 
 \ar[l]_-{(p_{12^a}, p_{23})} 
\ar[d]^{\id \times \widetilde \d \times \id}_{\wr} 
\ar `[d] `[ddd] ^-{p_{13}} [ddd]
\\
T^*X_{12} \times_{X_2} T^*X_{23} \ar[u]^{j_\pi} \ar[d]_{j_d} \ar@{}[rd]|-{\Box}
& T^*X_1 \times T^*_{\D_2}X_{22} \times T^*X_3 \ar[l]_-i \ar[d]^p \\
T^*X_{123} &  T^*X_1 \times X_2 \times T^*X_3 \ar[l]_-{{q_{13}}_d} 
\ar[d]^{{q_{13}}_\pi} \\
& T^*X_{13}
}
\]
where $\widetilde \d$ is defined by $(x_2;\xi_2) \mapsto (x_2,x_2;-\xi_2,\xi_2)$.

By Lemma \ref{lem:compker}, we get a morphism 
\begin{align}\label{mor:muhom}
& \bR {j_d}_! j_\pi^{-1}(\mu hom(G_1,F_1)\boxtimes \mu hom(G_2,F_2)) \notag \\
& \hspace{30mm} \to 
\mu hom(j^!(G_1 \boxtimes G_2)
\otimes \o^{\otimes -1}_{X_{123}/ X_{1223}},j^{-1}(F_1 \boxtimes F_2)).
\end{align}
Set $G:=j^!(G_1 \boxtimes G_2) \otimes \o^{\otimes -1}_{X_{123}/ X_{1223}} 
 \in \Db(k_{123})$ and $F:=j^{-1}(F_1 \boxtimes F_2) \in \Db(k_{123})$.
Combining \eqref{mor:muhom} with the morphism 
\begin{align}
\bR {{q_{13}}_\pi}_! {q_{13}}_d^{-1} \mu hom(G,F)
\to \mu hom(\bR {q_{13}}_* G,\bR {q_{13}}_! F)
\end{align}
from \cite[Proposition 4.4.7]{KS90},
we get a morphism 
\begin{align*}
\bR {{q_{13}}_\pi}_! {q_{13}}_d^{-1}
\bR {j_d}_! j_\pi^{-1}
(\mu hom(G_1,F_1)\boxtimes \mu hom(G_2,F_2))
& \to
 \mu hom(\bR {q_{13}}_* G,\bR {q_{13}}_! F) \\
& \, \simeq 
\mu hom(G_1 \underset{2}{*} G_2,F_1 \underset{2}{\circ} F_2).
\end{align*}
By the above commutative diagram, we have 
\begin{align*}
 \bR {{q_{13}}_\pi}_! {q_{13}}_d^{-1} \bR {j_d}_! j_\pi^{-1}
& \simeq 
\bR {{q_{13}}_\pi}_! \bR p_! i^{-1} j_\pi^{-1} \\
& \simeq 
\bR {{q_{13}}_\pi}_! \bR p_! 
(\id \times \widetilde \d \times \id)_! 
(p_{12^a}, p_{23})^{-1} \\
& \simeq
\bR {p_{13}}_! (p_{12^a}, p_{23})^{-1}.
\end{align*}
Thus, the result follows from the isomorphisms 
\begin{align*}
& \quad \ \bR {{q_{13}}_\pi}_! {q_{13}}_d^{-1}
 \bR {j_d}_! j_\pi^{-1}
(\mu hom(G_1,F_1)\boxtimes \mu hom(G_2,F_2)) \\
& \simeq 
\bR {p_{13}}_! (p_{12^a}, p_{23})^{-1} 
(\mu hom(G_1,F_1)\boxtimes \mu hom(G_2,F_2)) \\
& \simeq 
\bR {p_{13}}_! 
(p_{12^a}^{-1}\mu hom(G_1,F_1)
\otimes p_{23}^{-1} \mu hom(G_2,F_2)).
\end{align*}
\end{proof}

\section{Definition of graph trace kernels}

\subsection{Microlocal homology associated with morphisms}
Let $X$ be a manifold and $\phi \colon X \to X$ be a morphism of manifolds.
We shall identify $X$ with the diagonal $\D_X$ of $X \times X$ and write 
$\D$ instead of $\D_X$ if there is no risk of confusion.
We shall also identify $T^*X$ with $T^*_\D(X \times X)$ by means of the map 
\begin{align}
\d_{T^*X}^a \colon T^*X \to T^*(X \times X), \; (x;\xi) \mapsto (x,x;\xi,-\xi).
\end{align}

We denote by $\d_\phi=(\id_X,\phi) \colon X \hookrightarrow X \times X$ the graph map of $\phi$ and by $\G_\phi=\d_\phi(X)$ the graph of $\phi$.
Set $k_{\G_\phi}:=(\d_\phi)_*k_X$, $\o_{\G_\phi}:=(\d_\phi)_*\o_X$ and $\o^{\otimes -1}_{\G_\phi}:=(\d_\phi)_*\o^{\otimes -1}_X$.
In the case $\phi=\id_X$, we shall write $\d$ for $\d_\phi$ and $k_\D$ for $k_{\G_\phi}$, etc.

\begin{df}
Let $\L$ be a closed conic subset of $T^*X$.
We set
\begin{enumerate}
\item $\SMH_{\hspace{-1mm} \L}(\phi):=\bR \G_\L(\d_{T^*X}^a)^{-1} \mu 
      hom(k_\D,\o_{\G_\phi})$,
\item $\MH_\L(\phi):=\bR  \G(T^*X ; \SMH_{\hspace{-1mm} \L}(\phi))$,
\item $\MH^n_\L(\phi):=H^n(\MH_\L(\phi))$.
\end{enumerate} 
\end{df}

Let $\phi_i \colon X_i \to X_i \; (i=1,2,3)$ be morphisms of manifolds.
We write $\D_i$ for $\D_{X_i} \subset X_{ii}$, etc.

\begin{lem}\label{lem:mh}
We have natural morphisms: 
\begin{enumerate}
\item $\o_{\G_{\phi_{12}}} \underset{22}{\circ} 
      (k_{\G_{\phi_2}}\boxtimes\o_{\G_{\phi_3}}) \to 
      \o_{\G_{\phi_{13}}}$,
\item $k_{\D_{13}} \to k_{\D_{12}}\underset{22}{*} 
      (\o^\inv_{\D_2}\boxtimes k_{\D_3})$.
\end{enumerate}
\end{lem}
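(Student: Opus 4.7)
The plan is to construct both morphisms by first unfolding the composition $\underset{22}{\circ}$ (resp.\ $\underset{22}{*}$) as a pushforward of a derived tensor product (resp.\ $\d^!$-pullback) on the product space $X_{112233}$, and then reducing the middle $X_{22}$-factor via the counit (resp.\ unit) of an adjunction for $\mathbf{a}_{X_2}\colon X_2\to\pt$.

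For (i), I would first note the identification $\o_{\G_{\phi_{12}}}\simeq\o_{\G_{\phi_1}}\boxtimes\o_{\G_{\phi_2}}$, so that only the middle $X_{22}$-factor involves a nontrivial composition. Unfolding the definition as $\bR {q_{1133}}_!$ applied to the tensor product of the pullbacks, the middle tensor becomes $\o_{\G_{\phi_2}}\otimes k_{\G_{\phi_2}}\simeq\o_{\G_{\phi_2}}$, since both are $({\d_{\phi_2}})_*$-pushforwards and $k_{X_2}\otimes\o_{X_2}\simeq\o_{X_2}$. Using the factorization $q_{1133}\circ\d_{\phi_{123}}=\d_{\phi_{13}}\circ q_{13}$ (where $\d_{\phi_{123}}\colon X_{123}\to X_{112233}$ is the combined graph map and $q_{13}\colon X_{123}\to X_{13}$ the projection) together with a K\"unneth decomposition, the whole composite identifies with $({\d_{\phi_{13}}})_*(\o_{X_{13}}\otimes\bR {\mathbf{a}_{X_2}}_!\o_{X_2})$. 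The desired morphism to $\o_{\G_{\phi_{13}}}=({\d_{\phi_{13}}})_*\o_{X_{13}}$ is then induced by the integration map $\bR {\mathbf{a}_{X_2}}_!\o_{X_2}=\bR {\mathbf{a}_{X_2}}_!\mathbf{a}_{X_2}^!k_\pt\to k_\pt$, the counit of the $(\bR {\mathbf{a}_{X_2}}_!,\mathbf{a}_{X_2}^!)$-adjunction.

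For (ii), the approach is formally dual. I would decompose $k_{\D_{12}}\simeq k_{\D_1}\boxtimes k_{\D_2}$ and compute the middle factor of $\d^!$ as $\d_{X_{22}}^!(k_{\D_2}\boxtimes\o^\inv_{\D_2})$. Base change along a Cartesian square involving the diagonal $\delta_2\colon X_2\hookrightarrow X_{22}$, combined with $\delta_2^!(k_{X_2}\boxtimes\o^\inv_{X_2})\simeq(\o^\inv_{X_2})^{\otimes 2}$ (which follows from the smooth-morphism formula $q_2^!(\cdot)\simeq q_2^{-1}(\cdot)\otimes q_1^{-1}\o_{X_2}$), yields $({\delta_2})_*(\o^\inv_{X_2})^{\otimes 2}$ in the middle. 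Tensoring with the relative dualizing, which is $k_{X_{11}}\boxtimes\o_{X_{22}}\boxtimes k_{X_{33}}$, cancels the two copies of $\o^\inv_{X_2}$ against $\o_{X_{22}}\simeq\o_{X_2}\boxtimes\o_{X_2}$, leaving $k_{\D_2}$. Pushing forward by $q_{1133}$ then gives $k_{\D_1}\boxtimes\bR {\mathbf{a}_{X_2}}_*k_{X_2}\boxtimes k_{\D_3}$, and the target morphism from $k_{\D_{13}}\simeq k_{\D_1}\boxtimes k_\pt\boxtimes k_{\D_3}$ is obtained by applying the unit $k_\pt\to\bR {\mathbf{a}_{X_2}}_*k_{X_2}$ in the middle factor.

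The main technical obstacle will be the careful bookkeeping of dualizing twists and orientations in (ii): the cancellation of $(\o^\inv_{X_2})^{\otimes 2}$ against $\o_{X_{22}}$ must be identified with the canonical evaluation $\o_{X_2}\otimes\o^\inv_{X_2}\simeq k_{X_2}$, and each use of base change, of the closed-embedding projection formula, and of the smooth-morphism identity has to be compatible with this so that the resulting morphism is genuinely natural rather than defined only up to an overall orientation or shift.
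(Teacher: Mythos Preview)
Your argument is correct and matches the paper's approach. For (i) the paper gives exactly the same three-line computation (without spelling out that the final arrow is the counit $\bR{\mathbf{a}_{X_2}}_!\,\o_{X_2}\to k_\pt$, which you make explicit); for (ii) the paper simply cites \cite[Lemma~4.3]{KS14}, and your direct dual computation---base change to get $(\d_2)_*(\o_{X_2}^{\inv})^{\otimes 2}$ in the middle, cancellation against $\o_{X_{22}}$, and the unit $k_\pt\to\bR{\mathbf{a}_{X_2}}_*k_{X_2}$---is precisely the argument that reference contains.
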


\begin{proof}
We denote by $\d_{22}$ the diagonal embedding $X_{112233} \hookrightarrow X_{11222233}$.

\noindent(i) 
We have morphisms
\begin{align*}
 \o_{\G_{\phi_{12}}}
\underset{22}{\circ}(k_{\G_{\phi_2}}\boxtimes\o_{\G_{\phi_3}}) 
& = 
\bR {q_{1133}}_! \d_{22}^{-1}
(\o_{\G_{\phi_1}} \boxtimes \o_{\G_{\phi_2}} 
\boxtimes k_{\G_{\phi_2}}\boxtimes\o_{\G_{\phi_3}}) \\
& \simeq 
\bR {q_{1133}}_! 
(\o_{\G_{\phi_1}} \boxtimes \o_{\G_{\phi_2}} 
\boxtimes \o_{\G_{\phi_3}}) \\
& \to 
\o_{\G_{\phi_{13}}}.
\end{align*}

\noindent(ii) 
The proof is the same as that of \cite[Lemma 4.3]{KS14}.
\end{proof}

\begin{prop}\label{prop:compmu}
 We have a natural composition morphism 
\begin{align}
\mu hom(k_{\D_{12}},\o_{\G_{\phi_{12}}})
\overset{a}{\underset{22}{\circ}}
\mu hom(k_{\D_{23}},\o_{\G_{\phi_{23}}})
\to 
\mu hom(k_{\D_{13}},\o_{\G_{\phi_{13}}}).
\end{align} 
\end{prop}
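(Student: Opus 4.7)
The plan is to combine Proposition \ref{prop:compker} (the composition morphism for $\mu hom$ of arbitrary kernels) with Lemma \ref{lem:mh} (the morphisms specifically involving $k_\D$ and $\o_\G$). The key idea is to feed Proposition \ref{prop:compker} with suitably \emph{twisted} auxiliary sheaves on $X_{2233}$, chosen exactly so that the target can be collapsed to $\mu hom(k_{\D_{13}},\o_{\G_{\phi_{13}}})$ by the arrows in Lemma \ref{lem:mh}.

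Concretely, I would first apply Proposition \ref{prop:compker} (with composition along $X_{22}$) to the pairs $(G_1,F_1) := (k_{\D_{12}},\o_{\G_{\phi_{12}}})$ and $(G_2,F_2) := (\o^\inv_{\D_2} \boxtimes k_{\D_3},\, k_{\G_{\phi_2}} \boxtimes \o_{\G_{\phi_3}})$, obtaining
\[
\mu hom(k_{\D_{12}},\o_{\G_{\phi_{12}}}) \overset{a}{\underset{22}{\circ}} \mu hom(\o^\inv_{\D_2} \boxtimes k_{\D_3},\, k_{\G_{\phi_2}} \boxtimes \o_{\G_{\phi_3}}) \to \mu hom(G,F),
\]
where $G := k_{\D_{12}} \underset{22}{*} (\o^\inv_{\D_2} \boxtimes k_{\D_3})$ and $F := \o_{\G_{\phi_{12}}} \underset{22}{\circ} (k_{\G_{\phi_2}} \boxtimes \o_{\G_{\phi_3}})$. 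Next, Lemma \ref{lem:mh}(ii) supplies a morphism $k_{\D_{13}} \to G$ and Lemma \ref{lem:mh}(i) supplies a morphism $F \to \o_{\G_{\phi_{13}}}$; by the contravariance of $\mu hom$ in the first slot and its covariance in the second, these induce $\mu hom(G,F) \to \mu hom(k_{\D_{13}},\o_{\G_{\phi_{13}}})$.

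It remains to identify the left-hand factor
\[
\mu hom(\o^\inv_{\D_2} \boxtimes k_{\D_3},\, k_{\G_{\phi_2}} \boxtimes \o_{\G_{\phi_3}}) \simeq \mu hom(k_{\D_{23}},\, \o_{\G_{\phi_{23}}}).
\]
This uses the projection formula to write $\o^\inv_{\D_2} \simeq k_{\D_2} \otimes p_1^{-1}\o^\inv_{X_2}$ and $k_{\G_{\phi_2}} \simeq \o_{\G_{\phi_2}} \otimes p_1^{-1}\o^\inv_{X_2}$, where $p_1\colon X_{22} \to X_2$ is the first projection (which satisfies $p_1 \circ \d = p_1 \circ \d_{\phi_2} = \id_{X_2}$). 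Pulled back to $X_{2233}$, both arguments therefore differ from $k_{\D_{23}} = k_{\D_2}\boxtimes k_{\D_3}$ and $\o_{\G_{\phi_{23}}} = \o_{\G_{\phi_2}}\boxtimes\o_{\G_{\phi_3}}$ by the \emph{same} invertible twist, and this common twist cancels inside $\mu hom$ via the standard identity $\mu hom(A\otimes L,B\otimes L) \simeq \mu hom(A,B)$ for $L$ invertible.

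The main obstacle I expect is precisely this last identification: one must check carefully that the two $\o^\inv_{X_2}$-twists come from the \emph{same} projection (so that they genuinely cancel, rather than merely shifting the resulting class), and justify the twist-cancellation at the level of $\mu hom$. Once this is settled, splicing the three steps yields the desired canonical morphism; the remainder reduces to bookkeeping with exterior products and the definitions of the composition operations $\underset{22}{\circ}$ and $\underset{22}{*}$.
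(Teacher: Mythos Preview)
Your proposal is correct and follows essentially the same route as the paper's own proof: the paper also twists by $\o_2^\inv \boxtimes k_{233}$ to identify $\mu hom(k_{\D_{23}},\o_{\G_{\phi_{23}}}) \simeq \mu hom(\o_{\D_2}^\inv \boxtimes k_{\D_3},\, k_{\G_{\phi_2}} \boxtimes \o_{\G_{\phi_3}})$, then applies Proposition~\ref{prop:compker} and Lemma~\ref{lem:mh} exactly as you describe. Your careful verification that both twists come from the first projection $p_1$ (so that $p_1\circ\d = p_1\circ\d_{\phi_2} = \id_{X_2}$) is precisely what justifies the paper's one-line isomorphism.
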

\begin{proof}
We have the following isomorphisms: 
\begin{align*}
\mu hom(k_{\D_{23}},\o_{\G_{\phi_{23}}})
& \simeq 
\mu hom((\o_2^\inv \boxtimes k_{233}) \otimes k_{\D_{23}},
(\o_2^\inv \boxtimes k_{233}) \otimes \o_{\G_{\phi_{23}}}) \\
& \simeq 
\mu hom(\o_{\D_2}^\inv \boxtimes k_{\D_3},
k_{\G_{\phi_2}} \boxtimes \o_{\G_{\phi_3}}).
\end{align*}
Applying Proposition \ref{prop:compker} and Lemma \ref{lem:mh}, 
we get a chain of morphisms 
\begin{align*}
& \quad \ \, \mu hom(k_{\D_{12}},\o_{\G_{\phi_{12}}})
\overset{a}{\underset{22}{\circ}}
\mu hom(k_{\D_{23}},\o_{\G_{\phi_{23}}}) \\
& \, \simeq
\mu hom(k_{\D_{12}},\o_{\G_{\phi_{12}}})
\overset{a}{\underset{22}{\circ}} 
\mu hom(\o_{\D_2}^\inv \boxtimes k_{\D_3},
k_{\G_{\phi_2}} \boxtimes \o_{\G_{\phi_3}}) \\
& \to 
\mu hom(k_{\D_{12}} \underset{22}{*} 
(\o_{\D_2}^\inv \boxtimes k_{\D_3}), 
\o_{\G_{\phi_{12}}} \underset{22}{\circ} 
(k_{\G_{\phi_2}} \boxtimes \o_{\G_{\phi_3}})) \\
& \to 
\mu hom(k_{\D_{13}},\o_{\G_{\phi_{13}}}).
\end{align*}
\end{proof}

\begin{cor}\label{cor:compmu}
Let $\L_{ij}$ be a closed conic subset of $T^*X_{ij} \:(ij=12,23)$ and assume that 
\begin{align}
\L_{12} 
\overset{a}{\underset{2}{\times}} 
\L_{23}
\to T^*X_{13} 
\text{ is proper}.\label{cond:pro}
\end{align}
Set $\L_{13}:=\L_{12} \overset{a}{\underset{2}{\circ}} \L_{23} \cap (\d^a_{T^*X_{13}})^{-1}(T^*_{\G_{\phi_{13}}}X_{1313})$.
The composition of kernels induces a morphism 
\begin{align}
\overset{a}{\underset{2}{\circ}} \colon 
\MH_{\L_{12}}(\phi_{12}) 
\otimes 
\MH_{\L_{23}}(\phi_{23}) 
\to 
\MH_{\L_{13}}(\phi_{13}).
\end{align} 
In particular, a cohomology class 
$\lambda \in \MH^0_{\L_{12}}(\phi_{12})$ 
defines a morphism 
\begin{align}
\lambda \ \overset{a}{\underset{2}{\circ}} \colon 
\MH_{\L_{23}}(\phi_{23}) 
\to 
\MH_{\L_{13}}(\phi_{13}).
\end{align} 
\end{cor}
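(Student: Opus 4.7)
The plan is to start from the sheaf-level composition morphism of Proposition~\ref{prop:compmu} and promote it to a pairing of microlocal homologies by taking global sections with appropriate supports, using the properness hypothesis \eqref{cond:pro} to control the push-forward that is hidden inside $\overset{a}{\underset{22}{\circ}}$.

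First, I would rewrite the three microlocal homologies via the identifications $\d^a_{T^*X_{ij}}\colon T^*X_{ij} \xrightarrow{\sim} T^*_{\D_{ij}}X_{ijij}$, so that a class $\lambda_{ij}\in \MH^0_{\L_{ij}}(\phi_{ij})$ corresponds to a global section of $\mu hom(k_{\D_{ij}},\o_{\G_{\phi_{ij}}})$ supported on $\d^a_{T^*X_{ij}}(\L_{ij})$. Given $\lambda_{12}$ and any class in $\MH_{\L_{23}}(\phi_{23})$, form the external product to obtain a section of
\begin{equation*}
\mu hom(k_{\D_{12}},\o_{\G_{\phi_{12}}})\boxtimes \mu hom(k_{\D_{23}},\o_{\G_{\phi_{23}}})
\end{equation*}
supported on $\d^a_{T^*X_{12}}(\L_{12})\times \d^a_{T^*X_{23}}(\L_{23})$.

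Next, I would unfold the definition of $\overset{a}{\underset{22}{\circ}}$ as $\bR{p_{13}}_!(p_{12^a}^{-1}(\cdot)\otimes p_{23}^{-1}(\cdot))$. Pulling the external product section back along $(p_{12^a},p_{23})$ gives a section of $p_{12^a}^{-1}\mu hom(k_{\D_{12}},\o_{\G_{\phi_{12}}})\otimes p_{23}^{-1}\mu hom(k_{\D_{23}},\o_{\G_{\phi_{23}}})$ supported on $\d^a(\L_{12})\overset{a}{\underset{2}{\times}}\d^a(\L_{23})$. Precisely here the hypothesis \eqref{cond:pro} is used: since this support is proper over $T^*X_{13}$ (and hence over the relevant cotangent bundle on the $1313$-side), $\bR{p_{13}}_!$ applied to our section yields an honest global section, with support contained in $\L_{12}\overset{a}{\underset{2}{\circ}}\L_{23}$.

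Finally, I would apply the composition morphism of Proposition~\ref{prop:compmu} to this section; since the target $\mu hom(k_{\D_{13}},\o_{\G_{\phi_{13}}})$ is automatically supported on $(\d^a_{T^*X_{13}})(T^*X_{13})\cap T^*_{\G_{\phi_{13}}}X_{1313}$ by the micro-support estimates on $k_{\D_{13}}$ and $\o_{\G_{\phi_{13}}}$, the resulting section lies in $\bR\G_{\L_{13}}$ after restriction along $\d^a_{T^*X_{13}}$, giving exactly the required element of $\MH_{\L_{13}}(\phi_{13})$. Bilinearity is clear from the construction, so passing to a fixed class in $\MH^0_{\L_{12}}(\phi_{12})$ yields the final morphism. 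The chief subtlety to verify carefully is the support bookkeeping in the third step: checking that the properness assumption is precisely what is needed so that $\bR{p_{13}}_!$ preserves global sections and lands in the desired support $\L_{13}$, and that no extra terms of the supports of the $\mu hom$'s are lost when passing through $\d^a$.
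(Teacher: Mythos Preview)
Your proposal is correct and follows essentially the same route as the paper: identify each $\MH_{\L_{ij}}(\phi_{ij})$ with $\bR\G_{\d^a_{T^*X_{ij}}\L_{ij}}(T^*X_{iijj};\mu hom(k_{\D_{ij}},\o_{\G_{\phi_{ij}}}))$, use the properness hypothesis \eqref{cond:pro} to produce a section of the $\overset{a}{\underset{22}{\circ}}$-composition with the correct support, and then apply Proposition~\ref{prop:compmu}. The paper's proof simply packages your three middle steps (external product, pullback, push-forward) into the single operation $\overset{a}{\underset{22}{\circ}}$ rather than unfolding its definition, but the argument is the same.
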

\begin{proof}
Noticing that 
\begin{align}
\MH_{\L_{12}}(\phi_{12})
& \simeq 
\bR\G_{\d^a_{T^*X_{12}}\L_{12}}(T^*X_{1122};
\mu hom(k_{\D_{12}},\o_{\G_{\phi_{12}}})), \\
\MH_{\L_{23}}(\phi_{23})
& \simeq 
\bR\G_{\d^a_{T^*X_{23}}\L_{23}}(T^*X_{2233};
\mu hom(k_{\D_{23}},\o_{\G_{\phi_{23}}})), 
\end{align}
we obtain a chain of morphisms
\begin{align*}
& \MH_{\L_{12}}(\phi_{12}) 
\otimes 
 \MH_{\L_{23}}(\phi_{23}) \\
\to 
& \bR\G_{\d^a_{T^*X_{12}}\L_{12}\overset{a}{\underset{2}{\circ}}
\d^a_{T^*X_{23}}\L_{23}}
(T^*X_{1133};\mu hom(k_{\D_{12}},\o_{\G_{\phi_{12}}})
\overset{a}{\underset{22}{\circ}}
\mu hom(k_{\D_{23}},\o_{\G_{\phi_{23}}})) \\
\to 
& \bR\G_{\d^a_{T^*X_{13}}\L_{13}}
(T^*X_{1133};\mu hom(k_{\D_{13}},\o_{\G_{\phi_{13}}})) \\
\simeq \,
& \MH_{\L_{13}}(\phi_{13}).
\end{align*}
Here, the first morphism comes from 
the assumption \eqref{cond:pro} and the second one is given by Proposition \ref{prop:compmu}.
\end{proof}

\subsection{Microlocal Lefschetz classes of graph trace kernels}
Let $\phi \colon X \to X$ be a morphism of manifolds.

\begin{df}
A \textit{$\phi$-graph trace kernel} $(K,u,v)$ is the data of $K \in 
 \Db(k_{X \times X})$ together with morphisms
\begin{align}
k_\D \overset{u}{\longrightarrow} K \quad \text{and} \quad
 K \overset{v}{\longrightarrow} \o_{\G_\phi}.
\end{align} 
\end{df}
In particular, the original trace kernels defined in \cite{KS14} are $\id_X$-graph trace kernels.
If there is no risk of confusion, we simply write $K$ instead of $(K,u,v)$.

For a $\phi$-graph trace kernel $K$, we set 
\begin{align*}
 \SS_{\D,\phi}(K) : & =
\SS(K) \cap T^*_\D(X \times X) \cap T^*_{\G_\phi}(X \times X) \\
& =(\d_{T^*X}^a)^{-1}(\SS(K) \cap T^*_{\G_\phi}(X \times X)).
\end{align*}

\begin{df}
Let $(K,u,v)$ be a $\phi$-graph trace kernel.
\begin{enumerate}
\item The morphism $u$ defines an element $\widetilde{u}$ in $H^0_{\SS(K) \cap T^*_\D(X \times X)}(T^*X;\mu hom(k_\D,K))$.
The \textit{microlocal Lefschetz class} $\mu\Le(K,\phi) \in 
      H^0_{\SS_{\D,\phi}(K)}(T^*X;\mu hom(k_\D,\o_{\G_\phi}))$ of $K$ is 
      the image of $\widetilde{u}$ under the morphism $\mu hom(k_\D,K) \to \mu hom(k_\D,\o_{\G_\phi})$ associated with $v$.
\item Let $\L \subset T^*X$ be a closed conic subset containing $\SS_{\D,\phi}(K)$.
We denote by $\mu\Le_\L(K,\phi)$ the image of $\widetilde{u}$ in $H^0_\L(T^*X ; \mu hom(k_\D,\o_{\G_\phi}))$.
\end{enumerate}
\end{df}

Therefore, we have 
\begin{align}
\mu\Le_\L(K,\phi) \in \MH^0_\L(\phi).
\end{align}
If there is no risk of confusion, we simply write $\mu\Le(K,\phi)$ instead of $\mu\Le_\L(K,\phi)$.

We denote by $\Le(K,\phi)$ the restriction of $\mu\Le(K,\phi)$ to zero-section $X$ of $T^*X$ and call it the \textit{Lefschetz class} of $K$.
Note that 
\begin{align*}
\mu hom(k_\D,\o_{\G_\phi})|_{(X \times X)} 
& \simeq 
\bR\Shom(k_\D,\o_{\G_\phi}) \\
& \simeq (\d_\phi)_*\bR\Shom(k_{(\d_\phi)^{-1}(\D)},\o_X) \\
& \simeq (\d_\phi)_*\bR\G_M(\o_X),
\end{align*}
where $M:=\{x \in X ; \phi(x)=x \}$ is the fixed point set of $\phi$. 
Since $\bR\G_M(\o_X) \simeq \d^{-1}(\d_\phi)_* \bR\G_M(\o_X)$, we have 
\begin{align}
\Le(K,\phi) \in H^0_M(X;\o_X).
\end{align}
\smallskip
\break

\noindent \textbf{Graph trace kernels for constructible sheaves.}
Denote by $\Dbc(K_X)$ the full triangulated subcategory of $\Db(k_X)$ 
consisting of cohomologically constructible sheaves (see \cite[Section 3.4]{KS90}).

\begin{lem}\label{lem:gtkcc}
Let $F \in \Dbc(k_X)$ and $\Phi \colon \phi^{-1}F \to F$ be a morphism in $\Dbc(k_X)$.
There exist natural morphisms in $\Dbc(k_{X \times X})$
\begin{align}
& k_\D \to {\rm D}_X F \boxtimes F, \label{mor:gtk1}\\
& {\rm D}_X F \boxtimes F \to \o_{\G_\phi}.\label{mor:gtk2}
\end{align}
In other words, a pair $(F,\Phi)$ of an object $F \in \Dbc(k_X)$ 
and a morphism \linebreak $\Phi \colon \phi^{-1}F \to  F$ 
defines naturally a $\phi$-graph trace kernel.
\end{lem}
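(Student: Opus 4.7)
The plan is to construct the two morphisms separately, by adjunction along the closed embeddings $\d$ and $\d_\phi$, using the identity of $F$ for the first and the evaluation of $F$ against its dual for the second.

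For the first morphism $k_\D\to{\rm D}_X F\boxtimes F$, I would use the adjunction $\d_!\dashv\d^!$ (noting $\d_!\simeq\d_*$ for a closed embedding) to rewrite
\begin{align*}
\Hom(k_\D,{\rm D}_X F\boxtimes F)\simeq\Hom(k_X,\d^!({\rm D}_X F\boxtimes F)).
\end{align*}
The key step is then to identify the target. Using the biduality isomorphism $F\simeq{\rm D}_X{\rm D}_X F$ (valid because $F\in\Dbc(k_X)$), together with the K\"unneth formula ${\rm D}_{X\times X}(A\boxtimes B)\simeq{\rm D}_X A\boxtimes{\rm D}_X B$ for cc sheaves and the standard identity $\d^!\simeq{\rm D}_X\d^{-1}{\rm D}_{X\times X}$ on $\Dbc$, one obtains
\begin{align*}
\d^!({\rm D}_X F\boxtimes F)\simeq{\rm D}_X\d^{-1}(F\boxtimes{\rm D}_X F)\simeq{\rm D}_X(F\otimes{\rm D}_X F)\simeq\bR\Shom(F,F),
\end{align*}
where the last isomorphism uses the $\otimes$-$\bR\Shom$ adjunction combined with biduality: ${\rm D}_X(F\otimes{\rm D}_X F)\simeq\bR\Shom(F,\bR\Shom({\rm D}_X F,\o_X))\simeq\bR\Shom(F,{\rm D}_X{\rm D}_X F)\simeq\bR\Shom(F,F)$. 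The identity $\id_F\in\Hom(F,F)=H^0\bR\G(X,\bR\Shom(F,F))$ then produces \eqref{mor:gtk1} via this adjunction chain.

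For the second morphism ${\rm D}_X F\boxtimes F\to\o_{\G_\phi}$, the construction is more direct. Since $\o_{\G_\phi}=(\d_\phi)_*\o_X$, the adjunction $(\d_\phi)^{-1}\dashv(\d_\phi)_*$ reduces the task to constructing a morphism $(\d_\phi)^{-1}({\rm D}_X F\boxtimes F)\to\o_X$. Since $q_1\circ\d_\phi=\id_X$ and $q_2\circ\d_\phi=\phi$, one has $(\d_\phi)^{-1}({\rm D}_X F\boxtimes F)\simeq{\rm D}_X F\otimes\phi^{-1}F$. Composing $\id\otimes\Phi\colon{\rm D}_X F\otimes\phi^{-1}F\to{\rm D}_X F\otimes F$ with the canonical evaluation ${\rm D}_X F\otimes F=\bR\Shom(F,\o_X)\otimes F\to\o_X$ produces \eqref{mor:gtk2}.

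The main technical obstacle will be the identification $\d^!({\rm D}_X F\boxtimes F)\simeq\bR\Shom(F,F)$, where cohomological constructibility of $F$ is used essentially both for biduality and for K\"unneth compatibility of the duality functor. Cohomological constructibility of ${\rm D}_X F\boxtimes F$ follows from the stability of $\Dbc$ under duality and external tensor product, and $\o_{\G_\phi}\in\Dbc(k_{X\times X})$ since $\d_\phi$ is a closed embedding, so the constructed morphisms lie in $\Dbc(k_{X\times X})$ as required.
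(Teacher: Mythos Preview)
Your proof is correct and follows essentially the same approach as the paper: both parts use adjunction along $\d$ (resp.\ $\d_\phi$), identifying $\d^!({\rm D}_X F\boxtimes F)\simeq\bR\Shom(F,F)$ and sending $\id_F$ for \eqref{mor:gtk1}, and computing $\d_\phi^{-1}({\rm D}_X F\boxtimes F)\simeq{\rm D}_X F\otimes\phi^{-1}F$ followed by $\Phi$ and evaluation for \eqref{mor:gtk2}. The only cosmetic difference is that the paper states the isomorphism $\bR\Shom(F,F)\simeq\d^!({\rm D}_X F\boxtimes F)$ directly (it is a standard identity, cf.\ \cite[Proposition~3.4.4]{KS90}), whereas you derive it through biduality and K\"unneth duality; your derivation is valid but slightly longer than necessary.
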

\begin{proof}
\noindent(i) 
The morphism $\id_F$ induces a morphism
\begin{align}
k_X \to 
\bR\Shom(F,F) \simeq 
\d^!({\rm D}_X F \boxtimes F).
\end{align}
Hence, \eqref{mor:gtk1} is obtained by adjunction.

\noindent(ii) 
Noticing that $\d_\phi^{-1}({\rm D}_X F \boxtimes F) \simeq {\rm D}_X F 
 \otimes \phi^{-1}F$, we have a chain of morphisms 
\begin{align}
\d_\phi^{-1}({\rm D}_X F \boxtimes F) 
\overset{\Phi}{\to} 
{\rm D}_X F \otimes F \to \o_X.
\end{align}
Therefore, \eqref{mor:gtk2} is obtained by adjunction.
\end{proof}

We denote by $\TK_\phi(F,\Phi)$ the $\phi$-graph trace kernel associated 
with the pair $(F,\Phi)$ of $F \in \Dbc(k_X)$ and $\Phi \colon \phi^{-1}F \to F$. 
The graph trace kernel defines a microlocal Lefschetz class $\mu\Le(\TK_\phi(F,\Phi),\phi)$.
We also denote this class by $\mu\Le(F,\Phi,\phi)$.
Note that this construction coincides with that of Lefschetz cycles in \cite{MT10}.
\smallskip

\noindent \textbf{Graph trace kernels over one point.}
Let $X=\pt$.
In this case, a (graph) trace kernel $(K,u,v)$ is the data of $K \in 
\Db(k)$ and morphisms 
\begin{align}
k \overset{u}{\longrightarrow} K \overset{v}{\longrightarrow} k.
\end{align}
The (microlocal) Lefschetz class $\Le(K)$ of $K$ is the image of $1 \in k$ under the morphism $vu$.

Let us denote by $\Dbf(k)$ the full triangulated subcategory of $\Db(k)$ consisting of objects with finite-dimensional cohomology. 
Let $V \in \Dbf(k)$ and $f \colon V \to V$ be a $k$-linear map.
Set $K :=V^* \otimes V$ where $V^* :=\bR{\rm Hom}(V,k)$.
Let $u$ be the dual of the trace morphism $V \otimes V^* \to k$ and $v$ 
be the composition of $\id_{V^*} \otimes f \colon V^* \otimes V \to V^* 
\otimes V$ and the trace morphism.
Then
\begin{align}
\Le(V^* \otimes V)={\rm tr}(f):=\sum_{p \in \BZ} (-1)^p {\rm tr}(H^p(f)).
\end{align}

\section{Main results}

\subsection{Compositions of microlocal Lefschetz classes}
Let $X_1, X_2, X_3$ be manifolds and $\phi_i \colon X_i \to X_i \;(i=1,2,3)$ be morphisms.
For $ij=12, 23$, let $K_{ij}$ be a $\phi_{ij}$-graph trace kernel.

\begin{lem}\label{lem:compopr}
There are natural morphisms 
\begin{align}
K_{12} \underset{22}{\circ} (k_{\G_{\phi_2}}\boxtimes \o_{\G_{\phi_3}})
\to \o_{\G_{\phi_{13}}}, \label{mor:compopr1} \\
k_{\D_{13}} \to 
K_{12} \underset{22}{*} (\o_{\D_2}^{\otimes -1}\boxtimes k_{\D_3}).\label{mor:compopr2}
\end{align} 
\end{lem}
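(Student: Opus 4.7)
The plan is to deduce Lemma \ref{lem:compopr} directly from Lemma \ref{lem:mh} by exploiting the two structure morphisms $u_{12}\colon k_{\D_{12}} \to K_{12}$ and $v_{12}\colon K_{12} \to \o_{\G_{\phi_{12}}}$ of the $\phi_{12}$-graph trace kernel, combined with the bifunctoriality of the composition operations $\underset{22}{\circ}$ and $\underset{22}{*}$.

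For \eqref{mor:compopr1}, I would apply the functor $(\cdot) \underset{22}{\circ} (k_{\G_{\phi_2}} \boxtimes \o_{\G_{\phi_3}})$ to the morphism $v_{12}$ to obtain
\begin{align*}
K_{12} \underset{22}{\circ} (k_{\G_{\phi_2}} \boxtimes \o_{\G_{\phi_3}})
\longrightarrow
\o_{\G_{\phi_{12}}} \underset{22}{\circ} (k_{\G_{\phi_2}} \boxtimes \o_{\G_{\phi_3}}),
\end{align*}
and then compose with the morphism produced by Lemma \ref{lem:mh}(i). Since $\underset{22}{\circ}$ is built from $\bR(q_{1133})_!$, $\d_{22}^{-1}$, and tensor product, it is a bifunctor, so this construction is legitimate.

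For \eqref{mor:compopr2}, I would symmetrically apply the functor $(\cdot) \underset{22}{*} (\o_{\D_2}^\inv \boxtimes k_{\D_3})$ to $u_{12}$ to obtain
\begin{align*}
k_{\D_{12}} \underset{22}{*} (\o_{\D_2}^\inv \boxtimes k_{\D_3})
\longrightarrow
K_{12} \underset{22}{*} (\o_{\D_2}^\inv \boxtimes k_{\D_3}),
\end{align*}
and precompose with the morphism from Lemma \ref{lem:mh}(ii). Again the bifunctoriality of $\underset{22}{*}$ is immediate from its definition via $\bR(q_{1133})_*$, $\d_{22}^!$, and tensor product.

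There is no real obstacle: the content is entirely carried by Lemma \ref{lem:mh}, and the $\phi_{12}$-graph trace kernel structure provides exactly the two morphisms needed to push/pull that content through $K_{12}$. The only minor point worth being explicit about is that the composition operations $\underset{22}{\circ}$ and $\underset{22}{*}$ are functorial in each of their arguments, which follows at once from their definitions as compositions of derived functors.
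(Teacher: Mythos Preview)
Your proposal is correct and is essentially identical to the paper's own proof: both parts are obtained by applying Lemma~\ref{lem:mh} and then composing with the structure morphisms $v_{12}\colon K_{12}\to\o_{\G_{\phi_{12}}}$ and $u_{12}\colon k_{\D_{12}}\to K_{12}$, respectively, using the functoriality of $\underset{22}{\circ}$ and $\underset{22}{*}$ in the first variable.
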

\begin{proof}
\noindent{\rm (i)}
By Lemma \ref{lem:mh} (i), we have a morphism 
\begin{align}
\o_{\G_{\phi_{12}}} \underset{22}{\circ} (k_{\G_{\phi_2}}\boxtimes \o_{\G_{\phi_3}})
 \to \o_{\G_{\phi_{13}}}.
\end{align}
Composing it with the morphism $K_{12} \to \o_{\G_{\phi_{12}}}$, we get \eqref{mor:compopr1}.

\noindent{\rm (ii)}
By Lemma \ref{lem:mh} (ii), we have a morphism 
\begin{align}
k_{\D_{13}} \to k_{\D_{12}} \underset{22}{*} 
(\o_{\D_2}^{\otimes -1}\boxtimes k_{\D_3}).
\end{align} 
Composing it with the morphism $k_{\D_{12}} \to K_{12}$, we get the morphism \eqref{mor:compopr2}.
\end{proof}

Let $\Lambda_{1122} \subset T^*X_{1122}$ be a closed conic subset 
containing $\SS(K_{12})$ and $\L_{23}$ be a closed conic subset of $T^*X_{23}$.
Assume that 
\begin{align}
\L_{1122} 
\overset{a}{\underset{22}{\times}} \d^a_{T^*X_{23}} \L_{23} 
\to T^*X_{1133} 
\text{ is proper}.\label{cond:phipro}
\end{align}
Set 
\begin{align*}
 \L_{12} & := \L_{1122} \cap T^*_{\D_{12}}X_{1122} 
\cap T^*_{\G_{\phi_{12}}}X_{1122}, \\
\L_{1133} & := (\L_{1122} \cap T^*_{\G_{\phi_{12}}}X_{1122})
\overset{a}{\underset{22}{\circ}} \d^a_{T^*X_{23}} \L_{23}, \\
\L_{13} & := \L_{1133} \cap T^*_{\D_{13}}X_{1133} 
\cap T^*_{\G_{\phi_{13}}}X_{1133} \\
& \ = \L_{12} \overset{a}{\underset{2}{\circ}} 
\L_{23} \cap (\d^a_{T^*X_{13}})^{-1}(T^*_{\G_{\phi_{13}}}X_{13}).
\end{align*}
We define a map 
\begin{align}
\Phi_{K_{12}}  \colon 
\MH_{\L_{23}}(\phi_{23}) \to \MH_{\L_{13}}(\phi_{13})
\end{align}
by the chain of morphisms
{\small 
\begin{align*}
 & \hspace{14pt} \MH_{\L_{23}}(\phi_{23}) \\
 & \, \simeq 
\bR\G_{\d^a_{T^*X_{23}}\L_{23}}(T^*X_{2233};
\mu hom(k_{\D_{23}},\o_{\G_{\phi_{23}}})) \\
& \, \simeq 
\bR\G_{\d^a_{T^*X_{23}}\L_{23}}(T^*X_{2233};
\mu hom(\o^{\otimes -1}_{\D_2}\boxtimes k_{\D_3},
k_{\G_{\phi_2}}\boxtimes \o_{\G_{\phi_3}})) \\
& \to 
\bR\G_{\L_{1133}}(T^*X_{1133};
\mu hom(K_{12},\o_{\G_{\phi_{12}}})
\overset{a}{\underset{22}{\circ}}
\mu hom(\o^{\otimes -1}_{\D_2}\boxtimes k_{\D_3},
k_{\G_{\phi_2}}\boxtimes \o_{\G_{\phi_3}})) \\
& \to 
\bR\G_{\L_{1133}}(T^*X_{1133};
\mu hom(K_{12} \underset{22}{*}
(\o^{\otimes -1}_{\D_2}\boxtimes k_{\D_3}),
\o_{\G_{\phi_{12}}} \underset{22}{\circ}
(k_{\G_{\phi_2}}\boxtimes \o_{\G_{\phi_3}}))) \\
& \to 
\bR\G_{\d^a_{T^*X_{13}}\L_{13}}(T^*X_{1133};
\mu hom(k_{\D_{13}},\o_{\G_{\phi_{13}}})) 
\simeq \MH_{\L_{13}}(\phi_{13}).
\end{align*}}
The first morphism is given by $v \colon K_{12} \to \o_{\G_{\phi_{12}}}$ as follows: 
{\small 
\begin{align*}
& \quad \; \
\bR\G_{\d^a_{T^*X_{23}}\L_{23}}(T^*X_{2233};
\mu hom(F,G)) \\
& \to 
\bR\G_{\L_{1122} \cap T^*_{\G_{\phi_{12}}}X_{1122}}
(T^*X_{1122};\mu hom(K,\o_{\G_{\phi_{12}}})) \otimes 
\bR\G_{\d^a_{T^*X_{23}}\L_{23}}(T^*X_{2233};
\mu hom(F,G)) \\
& \to 
\bR\G_{\L_{1133}}(T^*X_{1133};
\mu hom(K_{12},\o_{\G_{\phi_{12}}})
\overset{a}{\underset{22}{\circ}}
\mu hom(F,G)).
\end{align*}}
Here, we set $F:=\o^\inv_{\D_2} \boxtimes k_{\D_3}, G:=k_{\G_{\phi_2}}\boxtimes \o_{\G_{\phi_3}}$, and $K:=K_{12}$ for simplicity, and use \eqref{cond:phipro}.
The second morphism comes from Proposition \ref{prop:compker} and the last one is induced by the morphisms in Lemma \ref{lem:mh} and Lemma \ref{lem:compopr}.

\begin{lem}\label{lem:compphi}
In the situation as above, we have 
\begin{align}
\Phi_{K_{12}} = 
\mu \Le(K_{12},\phi_{12}) \overset{a}{\underset{2}{\circ}}
 \colon \MH_{\L_{23}}(\phi_{23}) \to \MH_{\L_{13}}(\phi_{13}),
\end{align}
where the right hand side is the map given by Corollary $\ref{cor:compmu}$.

\end{lem}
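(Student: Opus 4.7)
The plan is to recognize that both maps arise from the same underlying composition, differing only in whether the morphism $u : k_{\D_{12}} \to K_{12}$ is inserted before or after applying Proposition \ref{prop:compker}. First recall that $u$ and $v$ define cohomology classes $\widetilde{u} \in H^0_{\SS(K_{12}) \cap T^*_{\D_{12}}X_{1122}}(T^*X_{12}; \mu hom(k_{\D_{12}}, K_{12}))$ and $\widetilde{v} \in H^0_{\SS(K_{12}) \cap T^*_{\G_{\phi_{12}}}X_{1122}}(T^*X_{12}; \mu hom(K_{12}, \o_{\G_{\phi_{12}}}))$. The Lefschetz class $\mu\Le(K_{12},\phi_{12})$ is by definition the image of $\widetilde{u}$ under the post-composition map induced by $v$, but an elementary check shows it is equivalently the image of $\widetilde{v}$ under the pre-composition map $u^{*} : \mu hom(K_{12}, \o_{\G_{\phi_{12}}}) \to \mu hom(k_{\D_{12}}, \o_{\G_{\phi_{12}}})$; both describe the class attached to the composite $v \circ u : k_{\D_{12}} \to \o_{\G_{\phi_{12}}}$.

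The main technical input is the naturality of the composition morphism of Proposition \ref{prop:compker} (equivalently, of Lemma \ref{lem:compker}) in its first $\mu hom$ argument. Writing $F := \o^\inv_{\D_2}\boxtimes k_{\D_3}$ and $G := k_{\G_{\phi_2}}\boxtimes \o_{\G_{\phi_3}}$, I claim the square
\begin{align*}
\mu hom(K_{12}, \o_{\G_{\phi_{12}}}) \overset{a}{\underset{22}{\circ}} \mu hom(F, G) & \longrightarrow \mu hom(K_{12} \underset{22}{*} F, \o_{\G_{\phi_{12}}} \underset{22}{\circ} G) \\
\downarrow & \downarrow \\
\mu hom(k_{\D_{12}}, \o_{\G_{\phi_{12}}}) \overset{a}{\underset{22}{\circ}} \mu hom(F, G) & \longrightarrow \mu hom(k_{\D_{12}} \underset{22}{*} F, \o_{\G_{\phi_{12}}} \underset{22}{\circ} G)
\end{align*}
commutes, where the horizontal arrows are the composition morphisms from Proposition \ref{prop:compker}, the left vertical is induced by $u$ via functoriality of $\mu hom$ in its first argument, and the right vertical is induced by pre-composition with $u \underset{22}{*} \id_F$. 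This is the naturality of the construction in Lemma \ref{lem:compker}, which is manifestly functorial in each of the four sheaf arguments: the underlying morphism $\mu_M(\cdot)\boxtimes\mu_N(\cdot) \to \mu_{M\times N}(\cdot\boxtimes\cdot)$ and the adjunction morphism invoked from \cite[Proposition 4.4.7]{KS90} are both natural transformations.

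With this square in hand, Lemma \ref{lem:compphi} follows by a diagram chase. Unwinding $\Phi_{K_{12}}$: one first pairs the input $\eta \in \MH_{\L_{23}}(\phi_{23})$ with $\widetilde{v}$, applies Proposition \ref{prop:compker}, and then contracts to $\mu hom(k_{\D_{13}}, \o_{\G_{\phi_{13}}})$ using Lemma \ref{lem:mh} (i) on the target and Lemma \ref{lem:compopr} (ii) on the source. Since Lemma \ref{lem:compopr} (ii) is by construction the composition of the morphism from Lemma \ref{lem:mh} (ii) with the map induced by $u \underset{22}{*} \id_F$, the commutative square above lets us transfer the application of $u$ from the post-composition step to the pre-composition step. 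After this transfer, pairing with $\widetilde{v}$ and then applying $u^{*}$ produces exactly $\mu\Le(K_{12},\phi_{12})$, and the remaining morphisms coincide term-by-term with those defining the map of Corollary \ref{cor:compmu}. The main obstacle is thus organizational rather than conceptual: one must carefully track the support conditions ensuring that each operation lands in the prescribed $\bR\G_{\L_{\bullet}}$, and verify that the rewriting $\mu hom(k_{\D_{23}}, \o_{\G_{\phi_{23}}}) \simeq \mu hom(F, G)$ used at the outset is compatible with the morphisms subsequently applied.
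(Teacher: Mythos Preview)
Your proposal is correct and follows essentially the same approach as the paper. The paper organizes the argument into a single commutative diagram with five nodes, whose lower square is exactly the naturality square you write down (the functoriality of Proposition~\ref{prop:compker} in the first $\mu hom$ argument under $u \colon k_{\D_{12}} \to K_{12}$), and whose upper triangle encodes your observation that pairing with $\widetilde{v}$ and then applying $u^{*}$ yields $\mu\Le(K_{12},\phi_{12})$; the two paths through the diagram then give $\Phi_{K_{12}}$ and $\mu\Le(K_{12},\phi_{12})\overset{a}{\underset{2}{\circ}}$ respectively.
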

\begin{proof}
Consider the following commutative diagram, where we use the same notation as above: 
$F:=\o^\inv_{\D_2} \boxtimes k_{\D_3}, G:=k_{\G_{\phi_2}}\boxtimes \o_{\G_{\phi_3}}$ and $K:=K_{12}$.
\[
\scalebox{0.8}{
\xymatrix@C=-8mm{
\bR\G_{\d^a_{T^*X_{23}}\L_{23}}(T^*X_{2233};\mu hom(F,G)) 
\ar[dd]_a \ar[dr]^d & \\
& \bR\G_{\L_{1133}}(T^*X_{1133};
\mu hom(k_{\D_{12}},\o_{\G_{\phi_{12}}})
\overset{a}{\underset{22}{\circ}} \mu hom(F,G)) 
\ar[dd]^e \\
\bR\G_{\L_{1133}}(T^*X_{1133};
\mu hom(K,\o_{\G_{\phi_{12}}})
\overset{a}{\underset{22}{\circ}} \mu hom(F,G)) 
\ar[dd]_b \ar[ur] & \\
& \bR\G_{\L_{1133}}(T^*X_{1133};
\mu hom(k_{\D_{12}} \underset{22}{*}F, 
\o_{\G_{\phi_{12}}} \underset{22}{\circ} G)) \\
\bR\G_{\L_{1133}}(T^*X_{1133};
\mu hom(K \underset{22}{*}F, 
\o_{\G_{\phi_{12}}} \underset{22}{\circ} G)) 
\ar[ur]_c. &
}}
\]
\noindent 
By using the morphisms in Lemma \ref{lem:mh}, we get a morphism 
\begin{align}
w \colon  
\bR\G_{\L_{1133}}(T^*X_{1133};\mu hom(k_{\D_{12}} \underset{22}{*} F, \o_{\phi_{12}}\underset{22}{\circ} G))\; \notag\\
\to 
\bR\G_{\d^a_{T^*X_{13}}\L_{13}}
(T^*X_{1133};\mu hom(k_{\D_{13}}, 
\o_{\phi_{13}})).
\end{align}
By construction, the morphism $\Phi_{K_{12}}$ is obtained as the composition of $a, b, c$, and $w$.
On the other hand, the morphism $\mu \Le(K_{12},\phi_{12}) \overset{a}{\underset{2}{\circ}}$ is obtained as the composition of $d, e$, and $w$.
Hence, the result follows from the commutativity of the above diagram.
\end{proof}

For $ij=12,23$, let $\Lambda_{iijj} \subset T^*X_{iijj}$ be a closed conic subset containing $\SS(K_{ij})$.
Assume that 
\begin{align}
\Lambda_{1122} \,\overset{a}{\underset{22}{\times}} \, \Lambda_{2233}
\to T^*X_{1133} \text{ is proper.} \label{cond:comppro}
\end{align}
Set $\L_{1133}=\Lambda_{1122} \,\overset{a}{\underset{22}{\circ}} \, \Lambda_{2233}$ and $\L_{13}=\L_{1133} \cap T^*_{\D_{13}}X_{1133} \cap T^*_{\G_{\phi_{13}}}X_{1133}$.

\begin{thm}\label{thm:gtkcom}
Let $K_{ij}$ be a $\phi_{ij}$-graph trace kernel with $\SS(K_{ij}) \subset \L_{iijj}$.
Assume that \eqref{cond:comppro} holds and set $\widetilde K_{23}:=(\o_2^{\otimes -1} \boxtimes k_{233})\otimes K_{23}$.
Then the following hold.
\begin{enumerate}
\item The object $K_{12}\underset{22}{\circ}\widetilde K_{23}$ is a $\phi_{13}$-graph trace kernel.\\
\item We have $\mu \Le(K_{12}\underset{22}{\circ}\widetilde 
      K_{23},\phi_{13})=\mu \Le(K_{12},\phi_{12}) \overset{a}{\underset{2}{\circ}} \mu \Le(K_{23},\phi_{23})$ 
as an element of $\MH^0_{\L_{13}}(\phi_{13})$.
\end{enumerate}
\end{thm}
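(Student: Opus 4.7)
\smallskip

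\noindent\textbf{Proof plan.} My plan is to handle (i) and (ii) in sequence, using Lemma \ref{lem:compphi} as the crucial bridge.

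For (i), I construct the unit and counit morphisms making $K_{12} \underset{22}{\circ} \widetilde{K}_{23}$ into a $\phi_{13}$-graph trace kernel. The properness assumption \eqref{cond:comppro} first guarantees that the canonical morphism
\begin{align*}
K_{12} \underset{22}{\circ} \widetilde{K}_{23} \to K_{12} \underset{22}{*} \widetilde{K}_{23}
\end{align*}
is an isomorphism. For the unit, I start from Lemma \ref{lem:mh}(ii) applied to $k_{\Delta_{12}}$ and $\o^{\otimes -1}_{\Delta_2} \boxtimes k_{\Delta_3}$, then push through $u_{12}$ and $u_{23}$ (suitably twisted) to land in $K_{12} \underset{22}{*} \widetilde{K}_{23}$, and finally invert the comparison isomorphism above. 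For the counit, I push via $v_{12}$ and $v_{23}$ into $\o_{\G_{\phi_{12}}} \underset{22}{\circ} (k_{\G_{\phi_2}} \boxtimes \o_{\G_{\phi_3}})$ and then apply Lemma \ref{lem:mh}(i). Equivalently, (i) can be read off directly from Lemma \ref{lem:compopr}.

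For (ii), the idea is to identify both sides with a single natural image of $\widetilde{u}_{12} \boxtimes \widetilde{u}_{23}$. By Lemma \ref{lem:compphi}, the right-hand side $\mu\Le(K_{12},\phi_{12}) \overset{a}{\underset{2}{\circ}} \mu\Le(K_{23},\phi_{23})$ equals $\Phi_{K_{12}}(\mu\Le(K_{23},\phi_{23}))$. Unwinding the definition of $\Phi_{K_{12}}$ together with $\mu\Le(K_{23},\phi_{23}) = (v_{23})_*(\widetilde{u}_{23})$, the image can be described as the image of $\widetilde{u}_{12} \overset{a}{\underset{22}{\circ}} \widetilde{u}_{23}$ (an element of $\mu hom(K_{12}, \o_{\G_{\phi_{12}}}) \overset{a}{\underset{22}{\circ}} \mu hom(\o^\inv_{\Delta_2}\boxtimes k_{\Delta_3}, k_{\G_{\phi_2}}\boxtimes\o_{\G_{\phi_3}})$) along the chain built from Proposition \ref{prop:compker}, Lemma \ref{lem:mh}, and Lemma \ref{lem:compopr}. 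On the other hand, $\mu\Le(K_{12} \underset{22}{\circ} \widetilde{K}_{23}, \phi_{13})$ is, by construction in (i) together with the definition of the Lefschetz class, the image of the very same composite of unit morphisms pushed along the same chain involving $v_{12}, v_{23}$. Hence the two are equal.

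The main obstacle will be the naturality diagram that identifies these two images. Concretely, one needs a commutative square whose rows are the composition morphisms of Proposition \ref{prop:compker} and whose columns are induced by $u_{12} \otimes u_{23}$ (respectively $v_{12} \otimes v_{23}$); the commutativity reduces to the bifunctoriality of $\mu hom$ and the naturality of the morphisms appearing in Lemmas \ref{lem:mh} and \ref{lem:compopr}. Once this diagram is in place, the equality of the two classes is a formal consequence, so the essence of (ii) is really this naturality chase; by contrast, the microlocal analysis (the properness condition and the isomorphism $\underset{22}{\circ} \simeq \underset{22}{*}$) was already absorbed into Lemma \ref{lem:compphi} and part (i).
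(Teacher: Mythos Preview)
Your proposal is correct and follows essentially the same route as the paper: for (i) you build the unit through the $*$-composition and the counit through the $\circ$-composition, using the properness hypothesis to identify them, exactly as the paper does via Lemma~\ref{lem:compopr} and the isomorphism $\alpha$; for (ii) you reduce via Lemma~\ref{lem:compphi} to the equality $\mu\Le(K_{12}\underset{22}{\circ}\widetilde K_{23},\phi_{13})=\Phi_{K_{12}}(\mu\Le(K_{23},\phi_{23}))$ and then verify this by a naturality diagram, which is precisely the commutative diagram the paper draws. One small slip: in your description of the intermediate element you write $\widetilde{u}_{12}$ but place it in $\mu hom(K_{12},\o_{\G_{\phi_{12}}})$, which is where $\widetilde{v}_{12}$ lives; the paper's diagram keeps these straight by tracking the top row (using $u_{12}$) and the bottom row (using $v_{23}$) separately.
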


\begin{proof}
\noindent(i) 
Tensoring the sequence 
\begin{align}
k_{\D_{23}} \to K_{23} \to 
\o_{\G_{\phi_{23}}}
\end{align} 
with $\o_2^\inv \boxtimes k_{233}$, 
we get a sequence 
\begin{align}
\o_{\D_2}^\inv \boxtimes k_{\D_3} \to 
\widetilde K_{23} \to 
k_{\G_{\phi_2}} \boxtimes \o_{\G_{\phi_3}}.
\end{align}
Combining this with Lemma \ref{lem:compopr}, we have the sequences
\begin{align}
k_{\D_{13}} \to 
K_{12} \underset{22}{*} (\o_{\D_2}^\inv \boxtimes k_{\D_3}) \to
K_{12} \underset{22}{*} \widetilde K_{23} \label{mor:compu}
\end{align}
and 
\begin{align}
K_{12} \underset{22}{\circ} \widetilde K_{23} \to 
K_{12} \underset{22}{\circ} (k_{\G_{\phi_2}}\boxtimes \o_{\G_{\phi_3}}) 
\to \o_{\G_{\phi_{13}}} \label{mor:compv}.
\end{align}
By the assumption \eqref{cond:comppro}, 
we have an isomorphism 
\begin{align}
\alpha  \colon  K_{12}\underset{22}{\circ}\widetilde K_{23}
\overset{\sim}{\longrightarrow} 
K_{12}\underset{22}{*}\widetilde K_{23} \label{mor:compisom}.
\end{align}
Using \eqref{mor:compu}--\eqref{mor:compisom}, we get that 
 $K_{12} \underset{22}{\circ} \widetilde K_{23}$ is a $\phi_{13}$-graph trace kernel.

\noindent(ii)
By Proposition \ref{prop:compker}, under the assumption \eqref{cond:comppro}, 
 $\id_{K_{12}}$ and $\id_{\widetilde K_{23}}$ define a morphism 
\begin{align}
\beta  \colon  K_{12}\underset{22}{*}\widetilde K_{23}
\to 
K_{12}\underset{22}{\circ}\widetilde K_{23}.
\end{align}
This morphism is the inverse of the morphism $\alpha$ of \eqref{mor:compisom}.

Now let us consider the following commutative diagram: 
\[
\scalebox{0.9}{
\xymatrix{
k_{\D_{13}} \ar[r] \ar[rd]& 
K_{12} \underset{22}{*} (\o_{\D_2}^{\otimes -1}\boxtimes k_{\D_3}) 
\ar[r] \ar[d] &
K_{12} \underset{22}{\circ} (k_{\G_{\phi_2}}\boxtimes \o_{\G_{\phi_3}}) 
\ar[r] &
\o_{\G_{\phi_{12}}} 
\underset{22}{\circ} (k_{\G_{\phi_2}}\boxtimes \o_{\G_{\phi_3}})
\\
& 
K_{12} \underset{22}{*} \widetilde K_{23} 
\ar@<-1.5mm>[r]_{\beta}^{\sim} &
K_{12} \underset{22}{\circ} \widetilde K_{23}. 
\ar[u] \ar[ru] \ar@<-1.5mm>[l]_{\alpha} & 
}}
\]
By the graph trace kernel structure of $K_{12} \underset{22}{\circ} \widetilde K_{23}$, the composition of the bottom arrows and $\gamma \colon \o_{\G_{\phi_{12}}} \underset{22}{\circ}  (k_{\G_{\phi_2}}\boxtimes \o_{\G_{\phi_3}}) \to \o_{\G_{\phi_{13}}}$ defines $\mu\Le(K_{12} \underset{22}{\circ} \widetilde K_{23},\phi_{13})$.
By the construction of the map $\Phi_{K_{12}}$, the composition of the top arrows and $\gamma$ defines $\Phi_{K_{12}}(\mu\Le(K_{23},\phi_{23}))$.
Hence, the result follows from Lemma \ref{lem:compphi}.
\end{proof}

\subsection{Operations on microlocal Lefschetz classes}
Let $X_1$ and $X_2$ be manifolds and $\phi_1 \colon X_1 \to X_1$ and $\phi_2 \colon X_2 \to X_2$ be morphisms of manifolds.
For $i=1,2$, let $K_i$ be a $\phi_i$-graph trace kernel and let $\L_{ii}$ be a closed conic subset of $T^*X_{ii}$ with $\SS(K_i) \subset \L_{ii}$.

Let $f \colon X_1 \to X_2$ be a morphism of manifolds.
Assume that $\phi_2 f =f \phi_1$, that is, the diagram 
\[
\xymatrix{
X_1 \ar[r]^f \ar[d]_{\phi_1} & X_2 \ar[d]^{\phi_2} \\
X_1 \ar[r]_f & X_2
}
\]
commutes.
Since $\G_f \subset (\phi_{12})^{-1}(\G_f)$, we have a natural morphism 
\begin{align}
\Phi \colon (\phi_{12})^{-1}k_{\G_f} \to k_{\G_f}.
\end{align}
Then the pair $(k_{\G_f},\Phi)$ defines naturally a $\phi_{12}$-graph trace kernel $\TK_{\phi_{12}}(k_{\G_f},\Phi)$ by Lemma \ref{lem:gtkcc}.

Set $\tf:=f \times f \colon X_{11} \to X_{22}$.
We identify $X_{1212}$ with $X_{1122}$.
Then we have 
\begin{align*}
 (\o_1^\inv \boxtimes k_{122})\otimes \TK_{\phi_{12}}(k_{\G_f},\Phi)
& \simeq (\o_1^{\otimes -1} \boxtimes k_{122})\otimes \o_{\G_f}\boxtimes k_{\G_f} \\
& \simeq k_{\G_f}\boxtimes k_{\G_f} \\
& \simeq k_{\G_{\tf}}.
\end{align*}
We also note that 
\begin{align}
\bR\tf_!\,K_1 
\simeq K_1 \underset{11}{\circ} k_{\G_{\tf}}\,,\quad
\tf^{-1}K_2 \simeq 
k_{\G_{\tf}} \underset{22}{\circ} K_2.
\end{align}
\smallskip

\noindent \textbf{External product.}
Let $X_2=\pt$. 
We then write $X_2$ instead of $X_3$.
For $i=1,2$, let $\L_i$ be a closed conic subset of $T^*X_i$.
In this case, we have the composition morphism 
\begin{align}
\MH_{\L_1}(\phi_1) \otimes 
\MH_{\L_2}(\phi_2)
\overset{\circ}{\longrightarrow}
\MH_{\L_1 \times \L_2}(\phi_{12}).
\end{align}
Taking the $0$-th cohomology, we have a morphism 
\begin{align}
\MH^0_{\L_1}(\phi_1) \otimes 
\MH^0_{\L_2}(\phi_2)
\overset{\circ}{\longrightarrow}
 \MH^0_{\L_1 \times \L_2}(\phi_{12}).
\end{align}
In this case, we shall denote by $\lambda_1 \times \lambda_2$ instead of $\lambda_1 \circ \lambda_2$.

Set 
\begin{align}
\L_i:=\L_{ii} \cap T^*_{\D_{i}}X_{ii} \quad (i=1,2).
\end{align}
Then by Theorem \ref{thm:gtkcom}, we obtain the following. 
\begin{prop}\label{prop:external}
The object $K_1 \boxtimes K_2$ is a $\phi_{12}$-graph trace kernel and 
\begin{align}
\mu\Le(K_1 \boxtimes K_2,\phi_{12})
=\mu\Le(K_1,\phi_1) \times \mu\Le(K_2,\phi_2).
\end{align} 
\end{prop}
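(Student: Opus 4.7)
The plan is to derive the statement as a direct specialization of Theorem~\ref{thm:gtkcom} in which the middle manifold is taken to be a single point. In the setup of that theorem take $X_1$, $X_2 := \pt$ and $X_3$, then relabel $X_3$ as $X_2$; the kernels $K_{12}$ and $K_{23}$, living on $X_{1122}$ and $X_{2233}$, correspond to $K_1$ on $X_{11}$ and $K_2$ on $X_{22}$ respectively, while $\L_{1122}$ and $\L_{2233}$ correspond to $\L_{11}$ and $\L_{22}$.

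First, I would verify that under this reduction the composed kernel $K_{12} \underset{22}{\circ} \widetilde{K}_{23}$ is canonically isomorphic to the external product $K_1 \boxtimes K_2$. Since $\o_\pt \simeq k$, the twist in $\widetilde{K}_{23} = (\o_2^\inv \boxtimes k_{233}) \otimes K_{23}$ is trivial and $\widetilde{K}_{23} \simeq K_{23}$; moreover, in $\underset{22}{\circ} = \bR q_{1133!}\,\d_{22}^{-1}(\,\cdot\,\boxtimes\,\cdot\,)$ both $\d_{22}$ and $q_{1133}$ become identity maps because the $X_2$-factors are points, so the composition degenerates to $\boxtimes$. One then checks that the two morphisms produced by Lemma~\ref{lem:compopr} reduce, via $k_{\D_1} \boxtimes k_{\D_2} \simeq k_{\D_{12}}$ and $\o_{\G_{\phi_1}} \boxtimes \o_{\G_{\phi_2}} \simeq \o_{\G_{\phi_{12}}}$, to $u_1 \boxtimes u_2$ and $v_1 \boxtimes v_2$.

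Next, the properness hypothesis \eqref{cond:comppro} is automatic in this situation: the fiber product $\L_{1122} \overset{a}{\underset{22}{\times}} \L_{2233}$ is taken over $T^*(\pt\times\pt)$, which is a single point, so it equals $\L_{11} \times \L_{22}$ and the map to $T^*X_{1133}$ is the natural inclusion, hence proper. Therefore Theorem~\ref{thm:gtkcom}(i) asserts that $K_1 \boxtimes K_2$ carries a natural $\phi_{12}$-graph trace kernel structure.

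Finally, the equality of microlocal Lefschetz classes follows from Theorem~\ref{thm:gtkcom}(ii): the composition $\overset{a}{\underset{2}{\circ}}$ of Corollary~\ref{cor:compmu} reduces, when the middle cotangent bundle is a point, to the external product $\times$ defined in the paragraph preceding the proposition. I do not foresee a serious obstacle here; the entire argument is a careful tracking of the specialization $X_2 = \pt$ and the consequent degeneration of all compositions and twists.
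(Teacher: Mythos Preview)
Your proposal is correct and follows essentially the same approach as the paper: the proposition is stated immediately after the case $X_2=\pt$ is set up, and the paper's proof is simply ``by Theorem~\ref{thm:gtkcom}''. You have merely spelled out the details of this specialization (triviality of the twist, degeneration of $\underset{22}{\circ}$ to $\boxtimes$, automatic properness, and identification of $\overset{a}{\underset{2}{\circ}}$ with $\times$) that the paper leaves implicit.
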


\noindent \textbf{Direct image.}
Let $X_1=\pt$.
We then write $X_1, X_2$ instead of $X_2, X_3$.
Let $\L_1 \subset T^*X_1$ be a closed conic subset.
Assume that 
\begin{align}
f \text{ is proper on } 
\L_1 \cap T^*_{X_1}X_1.
\end{align} 
We set 
\begin{align*}
f_{\mu, \phi_1 \to \phi_2}(\L_1)
:& =\L_1  \overset{a}{\underset{1}{\circ}} T^*_{\G_f}X_{12} 
\cap (\d^a_{T^*X_2})^{-1}(T^*_{\G_{\phi_2}}X_{22}) \\
& = 
f_d f_\pi^{-1}(\L_1) \cap (\d^a_{T^*X_2})^{-1}(T^*_{\G_{\phi_2}}X_{22})
\subset T^*X_2,
\end{align*}
and 
\begin{align}
f_{\mu, \phi_1 \to \phi_2}
:=\overset{a}{\underset{1}{\circ}} 
\mu\Le(k_{\G_f},\Phi,\phi_{12}) \colon 
\MH^0_{\L_1}(\phi_1) \to 
\MH^0_{f_{\mu, \phi_1 \to \phi_2}(\L_1)}(\phi_2).
\end{align}

\begin{prop}\label{prop:direct}
Assume that $\tf$ is proper on $\L_{11} \cap T^*_{X_{11}}X_{11}$ 
and set 
$
\L_1:=\L_{11} \cap T^*_{\D_{1}}X_{11}.
$
Then the object $\bR \tf_!\,K_1$ is a $\phi_2$-graph trace kernel and 
\begin{align}
\mu\Le(\bR\tf_!\,K_1,\phi_2)
=f_{\mu, \phi_1 \to \phi_2}(
\mu\Le(K_1,\phi_1)).
\end{align} 
\end{prop}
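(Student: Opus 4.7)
The plan is to deduce both statements directly from Theorem \ref{thm:gtkcom} by taking the leftmost factor in that theorem to be $\pt$. Starting from the two identifications displayed immediately before the statement, namely
\[
(\o_1^\inv \boxtimes k_{122}) \otimes \TK_{\phi_{12}}(k_{\G_f},\Phi) \simeq k_{\G_{\tf}}, \qquad K_1 \underset{11}{\circ} k_{\G_{\tf}} \simeq \bR\tf_!\,K_1,
\]
I would set $K_{12} := K_1$ and $K_{23} := \TK_{\phi_{12}}(k_{\G_f},\Phi)$ in the notation of Theorem \ref{thm:gtkcom}, with its three factors corresponding in our current situation to $\pt$, $X_1$ and $X_2$ respectively. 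Combining the two identifications, the composite $K_{12} \underset{22}{\circ} \widetilde K_{23}$ of Theorem \ref{thm:gtkcom} becomes precisely $\bR\tf_!\,K_1$.

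Next I would verify that the properness hypothesis \eqref{cond:comppro} of Theorem \ref{thm:gtkcom} holds in this setting. Since $\SS(\TK_{\phi_{12}}(k_{\G_f},\Phi))$ is contained in (a product of copies of) $T^*_{\G_f}X_{12}$, the map appearing in \eqref{cond:comppro} factors through $\tf : X_{11} \to X_{22}$ on the base and is essentially fibrewise linear in the conormal directions; the assumed properness of $\tf$ on $\L_{11} \cap T^*_{X_{11}}X_{11}$ then forces the required composition map to be proper. Theorem \ref{thm:gtkcom}(i) then immediately yields that $\bR\tf_!\,K_1$ is a $\phi_2$-graph trace kernel, and Theorem \ref{thm:gtkcom}(ii) gives
\[
\mu\Le(\bR\tf_!\,K_1, \phi_2) = \mu\Le(K_1,\phi_1) \overset{a}{\underset{1}{\circ}} \mu\Le(\TK_{\phi_{12}}(k_{\G_f},\Phi), \phi_{12}).
\]

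Finally, since $\mu\Le(\TK_{\phi_{12}}(k_{\G_f},\Phi),\phi_{12}) = \mu\Le(k_{\G_f},\Phi,\phi_{12})$ is by definition the class through which composition defines $f_{\mu,\phi_1\to\phi_2}$, the right-hand side equals $f_{\mu,\phi_1\to\phi_2}(\mu\Le(K_1,\phi_1))$, which is the desired identity. I expect the main obstacle to be the index bookkeeping: one has to verify that the output conic set $\L_{13}$ produced by Theorem \ref{thm:gtkcom}, after the relabelling $(X_1^{\rm thm},X_2^{\rm thm},X_3^{\rm thm}) = (\pt,X_1,X_2)$, coincides on the nose with $f_d f_\pi^{-1}(\L_1) \cap (\d^a_{T^*X_2})^{-1}(T^*_{\G_{\phi_2}}X_{22})$ from the definition of $f_{\mu,\phi_1\to\phi_2}(\L_1)$. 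This is mechanical but requires carefully tracing the projections $p_{ij^a}$, $p_{13}$ and the various antipodal and diagonal identifications.
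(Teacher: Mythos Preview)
Your proposal is correct and follows essentially the same approach as the paper: apply Theorem \ref{thm:gtkcom} with $(X_1^{\rm thm},X_2^{\rm thm},X_3^{\rm thm}) = (\pt,X_1,X_2)$, $K_{12}=K_1$, $K_{23}=\TK_{\phi_{12}}(k_{\G_f},\Phi)$, use the displayed identifications to recognize the composite as $\bR\tf_! K_1$, and note that the properness assumption on $\tf$ over $\L_{11}\cap T^*_{X_{11}}X_{11}$ yields the required properness of $\L_{11}\overset{a}{\underset{11}{\times}} T^*_{\G_{\tf}}X_{1122}\to T^*X_{22}$. The paper's proof is just a terser version of exactly this argument.
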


\begin{proof}
The assumption implies that $\L_{11} \overset{a}{\underset{11}{\times}} 
 T^*_{\G_\tf}X_{1122} \to T^*X_{22}$ is proper.
By Theorem \ref{thm:gtkcom}, $K_1 \underset{11}{\circ} (\o_1^\inv \boxtimes k_{122}) \otimes \TK_{\phi_{12}}(k_{\G_f},\Phi) 
\simeq K_1 \underset{11}{\circ} k_{\G_\tf} \simeq \bR \tf_! K_1$ is a $\phi_2$-graph trace kernel and we have 
\begin{align}
\mu\Le(\bR\tf_!K_1,\phi_2)=
\mu\Le(K_1,\phi_1) 
\overset{a}{\underset{1}{\circ}} 
\mu\Le(k_{\G_f},\Phi,\phi_{12}).
\end{align}
\end{proof}

\noindent \textbf{Inverse image.}
Let $X_3=\pt$.
Let $\L_2 \subset T^*X_2$ be a closed conic subset.
Assume that 
\begin{align}
 f \text{ is non-characteristic for } 
\L_2.
\end{align} 
We set 
\begin{align*}
 f^{\mu, \phi_1 \to \phi_2}(\L_2)
:&=T^*_{\G_f}X_{12}  \overset{a}{\underset{2}{\circ}} \L_2 
\cap (\d^a_{T^*X_1})^{-1}(T^*_{\G_{\phi_1}}X_{11}) \\
& = 
f_d f_\pi^{-1}(\L_2) 
\cap (\d^a_{T^*X_1})^{-1}(T^*_{\G_{\phi_1}}X_{11})
\subset T^*X_1, 
\end{align*}
and 
\begin{align}
 f^{\mu, \phi_1 \to \phi_2}
:=\mu\Le(k_{\G_f},\Phi,\phi_{12})
\overset{a}{\underset{2}{\circ}} 
 \colon 
\MH^0_{\L_2}(\phi_2) \to 
\MH^0_{f^{\mu, \phi_1 \to \phi_2}(\L_2)}(\phi_1).
\end{align}

\begin{prop}\label{prop:inverse}
Assume that $\tf$ is non-characteristic for $\L_{22}$ and set
$
\L_2:=\L_{22} \cap T^*_{\D_{2}}X_{22}.
$
Then the object $(\o_{X_1/X_2}\boxtimes k_1) \otimes \tf^{-1} K_2$ is a 
 $\phi_1$-graph trace kernel and 
\begin{align}
 \mu\Le((\o_{X_1/X_2}\boxtimes k_1) \otimes \tf^{-1} K_2,\phi_1)
=f^{\mu, \phi_1 \to \phi_2}
(\mu\Le(K_2,\phi_2)).
\end{align} 
\end{prop}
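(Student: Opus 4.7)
The plan is to mirror the proof of Proposition \ref{prop:direct}, applying Theorem \ref{thm:gtkcom} but with the roles of the kernels swapped so that the $\TK$-kernel sits on the left of the composition. Specifically, in the setup of Theorem \ref{thm:gtkcom} I take $X_3 = \pt$ (with $\phi_3 = \id$), $K_{12} := \TK_{\phi_{12}}(k_{\G_f},\Phi)$ on $X_{1122}$, and $K_{23} := K_2$ regarded as a $\phi_{23}$-graph trace kernel on $X_{2233} \simeq X_{22}$. With this labelling, the composition $K_{12} \underset{22}{\circ} \widetilde K_{23}$ in the theorem is a $\phi_{13} = \phi_1$-graph trace kernel on $X_{11}$.

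First I would verify the properness hypothesis \eqref{cond:comppro}. Since the underlying object of $\TK_{\phi_{12}}(k_{\G_f},\Phi)$ is $\o_{\G_f} \boxtimes k_{\G_f}$, whose micro-support lies in $T^*_{\G_f}X_{12} \times T^*_{\G_f}X_{12}$, the subset $\L_{1122} \overset{a}{\underset{22}{\times}} \L_{22}$ coincides, under the usual identifications, with the microlocal pullback of $\L_{22}$ along $\tf$. The non-characteristic assumption on $\tf$ for $\L_{22}$ then yields the required properness by a standard microlocal argument.

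Second I would identify the underlying object of the composition. With $\widetilde K_{23} = (\o_{X_2}^\inv \boxtimes k_{X_2}) \otimes K_2$, a projection formula / base change computation along the graph embedding $\d_f \colon X_1 \hookrightarrow X_{12}$ yields
\begin{align*}
\TK_{\phi_{12}}(k_{\G_f},\Phi) \underset{22}{\circ} \widetilde K_{23}
\simeq (\o_{X_1/X_2} \boxtimes k_1) \otimes \tf^{-1} K_2.
\end{align*}
Heuristically, the composition concentrates on $\G_f \times \G_f \subset X_{1122}$, whose projection to $X_{11}$ is an isomorphism; on this graph the factor $\o_{\G_f}$ restricts to $\o_{X_1}$ and the twist by $\o_{X_2}^\inv$ pulls back to $f^{-1}\o_{X_2}^\inv$, so together they produce $\o_{X_1/X_2} = \o_{X_1} \otimes f^{-1}\o_{X_2}^\inv$ on the first coordinate, while $K_2$ contributes $\tf^{-1}K_2$.

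Applying Theorem \ref{thm:gtkcom}(i) then equips $(\o_{X_1/X_2} \boxtimes k_1) \otimes \tf^{-1} K_2$ with a $\phi_1$-graph trace kernel structure, and Theorem \ref{thm:gtkcom}(ii) gives
\begin{align*}
\mu\Le((\o_{X_1/X_2} \boxtimes k_1) \otimes \tf^{-1}K_2, \phi_1)
= \mu\Le(k_{\G_f},\Phi,\phi_{12}) \overset{a}{\underset{2}{\circ}} \mu\Le(K_2,\phi_2),
\end{align*}
which is $f^{\mu,\phi_1 \to \phi_2}(\mu\Le(K_2,\phi_2))$ by definition. The main obstacle is the identification in step two: carefully tracking how the dualizing complexes combine under the kernel composition to produce exactly the relative dualizing complex $\o_{X_1/X_2}$; this is cleanest via the projection formula and base change along $\d_f$ rather than by a stalk-wise argument.
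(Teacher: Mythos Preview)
Your proposal is correct and follows essentially the same approach as the paper: set $X_3=\pt$, take $K_{12}=\TK_{\phi_{12}}(k_{\G_f},\Phi)$ and $K_{23}=K_2$, check properness from the non-characteristic hypothesis, identify the composition, and apply Theorem~\ref{thm:gtkcom}. The only difference is in the identification step: rather than arguing via $\d_f$, the paper simply uses the relation $(\o_1^\inv \boxtimes k_{122}) \otimes \TK_{\phi_{12}}(k_{\G_f},\Phi) \simeq k_{\G_{\tf}}$ established just before to peel off an $\o_1 \boxtimes k_1$ factor, then invokes $k_{\G_{\tf}} \underset{22}{\circ} (\cdot) \simeq \tf^{-1}(\cdot)$ and $\o_1 \otimes f^{-1}\o_2^\inv \simeq \o_{X_1/X_2}$.
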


\begin{proof}
The assumption implies that $T^*_{\G_\tf}X_{1122} 
 \overset{a}{\underset{22}{\times}} \L_{22} \to T^*X_{11}$ is proper.
By Theorem \ref{thm:gtkcom}, $\TK_{\phi_{12}}(k_{\G_f},\Phi) 
 \underset{22}{\circ} (\o_2^\inv \boxtimes k_2) \otimes K_2$ is a $\phi_1$-graph trace kernel.
Here, we have isomorphisms 
\begin{align*}
\TK_{\phi_{12}}(k_{\G_f},\Phi) \underset{22}{\circ}
(\o_2^\inv \boxtimes k_2) \otimes K_2
& \simeq 
(\o_1 \boxtimes k_1) \otimes (
k_{\G_\tf}\underset{22}{\circ}
(\o_2^\inv \boxtimes k_2) \otimes K_2) \\
& \simeq 
(\o_1 \boxtimes k_1) \otimes \tf^{-1}(
(\o_2^\inv \boxtimes k_2) \otimes K_2) \\
& \simeq 
(\o_1 \boxtimes k_1) \otimes 
(f^{-1} \o_2^\inv \boxtimes f^{-1} k_2) 
\otimes \tf^{-1} K_2 \\
& \simeq 
(\o_{X_1/X_2} \boxtimes k_1) 
\otimes \tf^{-1} K_2.
\end{align*}
Applying Theorem \ref{thm:gtkcom} again, we have 
\begin{align}
 \mu\Le((\o_{X_1/X_2}\boxtimes k_1) \otimes \tf^{-1} K_2,\phi_1)
=\mu\Le(k_{\G_f},\Phi,\phi_{12}) \overset{a}{\underset{2}{\circ}}
\mu\Le(K_2,\phi_2).
\end{align}
\end{proof}

\noindent \textbf{Tensor product.}
Let $X_1=X_2=X$ and $\phi_1=\phi_2=\phi$.
For $i=1,2$, let $K_i$ be a $\phi$-graph trace kernel and $\L_{ii} \subset T^*(X 
\times X)$ be a closed conic subset satisfying $\SS(K_i) \subset \L_{ii}$.
Assume that 
\begin{align}
\L_{11} \cap \L_{22}^a \subset T^*_{X \times X}(X \times X), \label{cond:tensor}
\end{align}
and set 
\begin{align}
\L_i:=\L_{ii} \cap T^*_{\D}(X \times X) \quad (i=1,2).
\end{align}
Recall that for a morphism $f \colon X \to Y$, we set $\tf:=f \times f \colon X \times X \to Y \times Y$. 
Since $\widetilde \phi \d=\d \phi$, we have a morphism 
\begin{align}
\d^{\mu,\phi \to \widetilde \phi}  \colon 
\MH^0_{\L_1 \times \L_2}(\widetilde \phi) 
\to \MH^0_{\L_1 + \L_2}(\phi).
\end{align}
Composing it with the morphism of external product 
\begin{align}
 \times \colon \MH^0_{\L_1}(\phi) \otimes \MH^0_{\L_2}(\phi)
\to \MH^0_{\L_1 \times \L_2}(\widetilde \phi),
\end{align}
we get a convolution morphism 
\begin{align}
 \star \colon \MH^0_{\L_1}(\phi) \otimes \MH^0_{\L_2}(\phi)
\to \MH^0_{\L_1 + \L_2}(\phi).
\end{align}

\begin{prop}
Assume that \eqref{cond:tensor} holds.
Then the object 
$(\o_X^{\otimes -1}\boxtimes k_X) \otimes K_1 \otimes K_2$ 
is a $\phi$-graph trace kernel and 
\begin{align}
 \mu\Le((\o_X^{\otimes -1}\boxtimes k_X) \otimes K_1 \otimes K_2,\phi)
=\mu\Le(K_1,\phi) \star \mu\Le(K_2,\phi).
\end{align} 
\end{prop}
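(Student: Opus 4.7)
The plan is to deduce the result by composing the external product and inverse image constructions already established, using the diagonal embedding $\d \colon X \hookrightarrow X \times X$, which intertwines $\phi$ and $\widetilde\phi$ since $\widetilde\phi \circ \d = \d \circ \phi$.

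First, I would apply Proposition \ref{prop:external} to the pair $(K_1,K_2)$. This produces a $\widetilde\phi$-graph trace kernel $K_1 \boxtimes K_2$ on $X \times X$, whose microlocal Lefschetz class equals $\mu\Le(K_1,\phi) \times \mu\Le(K_2,\phi)$, with micro-support governed by $\L_{11}$ and $\L_{22}$ (after the canonical identification of the quadruple product of $X$'s coming from swapping the middle two factors).

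Next, I would invoke Proposition \ref{prop:inverse} for the commuting square with $X_1 = X$, $X_2 = X \times X$, $\phi_1 = \phi$, $\phi_2 = \widetilde\phi$ and $f = \d$, taking the $\widetilde\phi$-graph trace kernel $K_1 \boxtimes K_2$ as the input. The required hypothesis is that $\widetilde\d = \d \times \d$ is non-characteristic for $\SS(K_1 \boxtimes K_2)$. A direct computation in local coordinates shows that the conormal bundle $T^*_{X\times X}\bigl((X\times X)^2\bigr)$ consists of covectors of the form $(\alpha, -\alpha, \beta, -\beta)$, so its intersection with $\SS(K_1)\times \SS(K_2)$ projects onto elements of $\SS(K_1) \cap \SS(K_2)^a \subset \L_{11} \cap \L_{22}^a$; by the hypothesis \eqref{cond:tensor} this set lies in the zero section, so the non-characteristic condition holds.

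Proposition \ref{prop:inverse} then yields that $(\o_{X/X\times X} \boxtimes k_X) \otimes \widetilde\d^{-1}(K_1 \boxtimes K_2)$ is a $\phi$-graph trace kernel whose microlocal Lefschetz class equals $\d^{\mu,\phi\to\widetilde\phi}\bigl(\mu\Le(K_1 \boxtimes K_2, \widetilde\phi)\bigr)$. I would then simplify this sheaf via two canonical identifications: first, $\widetilde\d^{-1}(K_1 \boxtimes K_2) \simeq K_1 \otimes K_2$, obtained by observing that, after the swap of middle factors in $X^4$, the map $\widetilde\d$ becomes the diagonal embedding of $X \times X$ into $(X\times X)^2$; second, $\o_{X/X\times X} \simeq \o_X \otimes \d^{-1}(\o_X \boxtimes \o_X)^{\otimes -1} \simeq \o_X^{\otimes -1}$. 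These give precisely the $\phi$-graph trace kernel $(\o_X^{\otimes -1} \boxtimes k_X) \otimes K_1 \otimes K_2$ of the statement.

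Finally, combining the Lefschetz-class identity of the first step with the definition of $\star$ as the composition of $\times$ and $\d^{\mu,\phi\to\widetilde\phi}$ produces the required equality
\[
\mu\Le\bigl((\o_X^{\otimes -1}\boxtimes k_X) \otimes K_1 \otimes K_2,\phi\bigr) = \mu\Le(K_1,\phi) \star \mu\Le(K_2,\phi).
\]
The main obstacles, both routine, are the verification of the non-characteristic condition for $\widetilde\d$ and the careful bookkeeping of the dualizing factors so that $\o_{X/X\times X}$ correctly reduces to $\o_X^{\otimes -1}$ in the final sheaf identification.
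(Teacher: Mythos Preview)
Your proposal is correct and follows exactly the paper's strategy: apply Proposition~\ref{prop:external} to obtain $K_1\boxtimes K_2$ as a $\widetilde\phi$-graph trace kernel, then Proposition~\ref{prop:inverse} with $f=\d$, and finally identify $\widetilde\d^{-1}(K_1\boxtimes K_2)\simeq K_1\otimes K_2$ and $\o_{X/X\times X}\simeq \o_X^{\otimes -1}$. One small bookkeeping point: in the ordering where $\widetilde\d(x_1,x_2)=(x_1,x_2,x_1,x_2)$ (the one that makes $\widetilde\d^{-1}(K_1\boxtimes K_2)\simeq K_1\otimes K_2$ immediate), the conormal covectors are $(\alpha,\beta,-\alpha,-\beta)$ rather than $(\alpha,-\alpha,\beta,-\beta)$, and it is in this ordering that the intersection with $\SS(K_1)\times\SS(K_2)$ yields points of $\SS(K_1)\cap\SS(K_2)^a$; your conclusion is right but the displayed form of the covector should be adjusted to match the chosen factor ordering.
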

\begin{proof}
Since we regard $\widetilde \d \colon X \times X \to X \times X \times X 
 \times X$ as the map $(x_1,x_2) \mapsto (x_1,x_2,x_1,x_2)$, we have $\widetilde \d^{-1}(K_1 \boxtimes K_2) \simeq K_1 \otimes K_2$.
The assumption implies $\d$ is non-characteristic for $\L_1 \times \L_2$.
Thus, the result follows from Proposition \ref{prop:external} and \ref{prop:inverse}, since $\o_{X/X \times X} \simeq \o_X^\inv$.
\end{proof}

\subsection{Application to Lefschetz fixed point formula for
constructible sheaves}
Let $X$ be a real analytic manifold and $\phi_X \colon X \to X$ be a morphism of manifolds.
We denote by $\DbR(k_X)$ the full triangulated subcategory of $\Db(k_X)$ consisting of $\BR$-constructible complexes.
Since $\BR$-constructible complexes are 
cohomologically constructible, a pair 
$(F,\Phi)$ of an object $F \in \DbR(k_X)$ and a morphism $\Phi \colon \phi_X^{-1}F \to F$ gives naturally a $\phi_X$-graph trace kernel $\TK_{\phi_X}(F,\Phi)$.

Let $Y$ be another real analytic manifold 
and $\phi_Y \colon Y \to Y$ be a morphism.
Let $f \colon X \to Y$ be a morphism of manifolds which satisfies $\phi_Y f=f \phi_X$.
Then we have a natural morphism 
\begin{align*}
\phi_Y^{-1}\bR f_*F 
& \to 
\phi_Y^{-1}\bR f_*
\bR{\phi_X}_* \phi_X^{-1}F \\
& \overset{\Phi}{\to} 
\phi_Y^{-1} \bR {\phi_Y}_* \bR f_*F \\
& \to 
\bR f_*F.
\end{align*}
We denote this morphism by $\bR f_* \Phi$.

\begin{prop}\label{prop:lcdirect}
Assume that $f$ is proper on $\Supp(F)$.
Then 
\begin{align}
\mu\Le(\bR f_*F,\bR f_*\Phi,\phi_Y) 
=
f_{\mu,\phi_X \to \phi_Y}(\mu\Le(F,\Phi,\phi_X)).
\end{align} 
\end{prop}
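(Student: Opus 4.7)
The plan is to deduce the proposition from Proposition~\ref{prop:direct} applied to $K_1 := \TK_{\phi_X}(F, \Phi)$, once we identify the resulting $\phi_Y$-graph trace kernel $\bR\tf_! K_1$ with $\TK_{\phi_Y}(\bR f_*F, \bR f_*\Phi)$.

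First, I would verify the properness hypothesis of Proposition~\ref{prop:direct}. Since the support of $\TK_{\phi_X}(F, \Phi) = {\rm D}_X F \boxtimes F$ is contained in $\Supp F \times \Supp F$, one may choose a closed conic $\L_{11} \supset \SS(K_1)$ whose intersection with the zero-section lies in $\Supp F \times \Supp F$. The assumption that $f$ is proper on $\Supp F$ then guarantees that $\tf = f \times f$ is proper on $\L_{11} \cap T^*_{X \times X}(X \times X)$, which is precisely what Proposition~\ref{prop:direct} requires.

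The central step is to construct an isomorphism of $\phi_Y$-graph trace kernels $\bR\tf_!\TK_{\phi_X}(F, \Phi) \simeq \TK_{\phi_Y}(\bR f_*F, \bR f_*\Phi)$. At the level of underlying complexes, Künneth gives $\bR\tf_!({\rm D}_X F \boxtimes F) \simeq \bR f_!{\rm D}_X F \boxtimes \bR f_! F$, and properness of $f$ on $\Supp F$ combined with Poincar\'e--Verdier duality yields $\bR f_! {\rm D}_X F \simeq {\rm D}_Y\bR f_! F$ and $\bR f_! F \simeq \bR f_* F$. It then remains to verify that the morphisms $u$ and $v$ produced by Proposition~\ref{prop:direct} on $\bR\tf_! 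K_1$ correspond, under these identifications, to the canonical ones attached to $\TK_{\phi_Y}(\bR f_*F, \bR f_*\Phi)$ by Lemma~\ref{lem:gtkcc}. For $u$ this comes from the compatibility between the unit $k_X \to \bR\Shom(F, F)$ and its direct image. For $v$ one traces the adjoint of $\d_{\phi_X}^{-1}({\rm D}_X F \boxtimes F) \simeq {\rm D}_X F \otimes \phi_X^{-1}F \overset{\Phi}{\to} {\rm D}_X F \otimes F \to \o_X$ through $\bR\tf_!$ using the Cartesian diagram relating $\d_{\phi_X}$, $\d_{\phi_Y}$ and $\tf$, matching the outcome with the construction of the analogous $v$-morphism from $\bR f_*\Phi$. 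Once this identification is established, Proposition~\ref{prop:direct} yields at once
\[
\mu\Le(\bR f_* F, \bR f_*\Phi, \phi_Y) = \mu\Le(\bR\tf_! K_1, \phi_Y) = f_{\mu, \phi_X \to \phi_Y}(\mu\Le(K_1, \phi_X)) = f_{\mu, \phi_X \to \phi_Y}(\mu\Le(F, \Phi, \phi_X)).
\]

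The main obstacle is the verification for the morphism $v$: one must show that applying $\bR\tf_!$ to the adjunction-defined $v$ of $\TK_{\phi_X}(F,\Phi)$ and then passing through the Poincar\'e--Verdier/K\"unneth identifications produces precisely the $v$ built from $\bR f_*\Phi$. This amounts to checking that the trace pairing ${\rm D}_X F \otimes F \to \o_X$ is compatible with proper direct image along $f$, a base change calculation on the Cartesian square formed by the graphs $\G_{\phi_X}$, $\G_{\phi_Y}$ and $f$; it is conceptually standard but the bookkeeping of the orientation and shift factors is delicate.
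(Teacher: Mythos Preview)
Your approach is essentially the same as the paper's: reduce to Proposition~\ref{prop:direct} via the identification $\TK_{\phi_Y}(\bR f_*F,\bR f_*\Phi)\simeq \bR\tf_!\,\TK_{\phi_X}(F,\Phi)$. The paper simply asserts this isomorphism of graph trace kernels without further justification, whereas you spell out the K\"unneth and Verdier-duality identifications and flag the compatibility of the $u$- and $v$-morphisms as the point requiring care; your added detail is accurate and fills in what the paper leaves implicit.
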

\begin{proof}
By assumption, $\bR f_* F \in \DbR(k_Y)$.
Hence, the pair $(\bR f_* F,\bR f_*\Phi)$ gives a $\phi_Y$-trace kernel $\TK_{\phi_Y}(\bR f_* F,\bR f_*\Phi)$.
Then we have an isomorphism 
\begin{align}
\TK_{\phi_Y}(\bR f_* F,\bR f_*\Phi)
\simeq 
\bR \tf_! \TK_{\phi_X}(F,\Phi).
\end{align}
Hence, by Proposition \ref{prop:direct},
\begin{align*}
\mu\Le(\bR f_* F,\bR f_*\Phi,\phi_Y)
& = 
\mu\Le(\bR \tf_! \TK_{\phi_X}(F,\Phi),\phi_Y) \\
& = 
f_{\mu,\phi_X \to \phi_Y}(\mu\Le(F,\Phi,\phi_X)).
\end{align*}
\end{proof}
Note that the above formula is similar to that of \cite{MT10}.

Applying Proposition \ref{prop:lcdirect} for 
$Y=\pt$ and the natural morphism $f=\mathbf{a} \colon X \to \pt$, 
we obtain

\begin{cor}
Assume that $\Supp(F)$ is compact.
Then 
\begin{align}
\tr(F,\Phi)=
\mathbf{a}_\mu
(\mu\Le(F,\Phi,\phi_X)),
\end{align}
where the left hand side is defined by 
\begin{align}
\tr(F,\Phi):=
\sum_{p \in \BZ} 
(-1)^p \tr \left(H^p(X;F) \to H^p(X;\phi_X^{-1}F) 
\overset{\Phi}{\to} H^p(X;F) \right).
\end{align} 
\end{cor}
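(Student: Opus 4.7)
The plan is to apply Proposition \ref{prop:lcdirect} in the special case $Y = \pt$, $\phi_Y = \id_\pt$, and $f = \mathbf{a} \colon X \to \pt$. The compactness of $\Supp(F)$ guarantees that $\mathbf{a}$ is proper on $\Supp(F)$, so the hypothesis of the proposition is met. Applying it immediately produces
\begin{align*}
\mu\Le(\bR\mathbf{a}_* F, \bR\mathbf{a}_* \Phi, \id_\pt) = \mathbf{a}_{\mu, \phi_X \to \id_\pt}(\mu\Le(F,\Phi,\phi_X)),
\end{align*}
and the only remaining task is to identify the left-hand side with the categorical trace $\tr(F,\Phi)$.

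To carry this out I would set $V := \bR\Gamma(X;F)$ and $g := \bR\mathbf{a}_* \Phi \colon V \to V$; since $F$ is $\BR$-constructible with compact support, $V$ lies in $\Dbf(k)$. Over a point, the construction of Lemma \ref{lem:gtkcc} specialises to the model discussed in the paragraph ``Graph trace kernels over one point'': $\TK_{\id_\pt}(V,g)$ is $V^* \otimes V$, equipped with the coevaluation $k \to V^* \otimes V$ as $u$ and with the composition $V^* \otimes V \to V^* \otimes V \to k$ (first map $\id_{V^*} \otimes g$, second map the evaluation) as $v$. The paper already records that the Lefschetz class of this kernel equals $\tr(g) = \sum_{p \in \BZ} (-1)^p \tr(H^p(g))$, which is precisely the quantity $\tr(F,\Phi)$ appearing in the statement.

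The step that requires real verification is the isomorphism of $\id_\pt$-graph trace kernels
\begin{align*}
\TK_{\id_\pt}(\bR\mathbf{a}_* F, \bR\mathbf{a}_* \Phi) \simeq \bR\widetilde{\mathbf{a}}_!\, \TK_{\phi_X}(F,\Phi)
\end{align*}
underlying the proof of Proposition \ref{prop:lcdirect}. By Künneth together with $\bR\mathbf{a}_! F \simeq \bR\mathbf{a}_* F$ on the compactly supported object $F$, one has $\bR\widetilde{\mathbf{a}}_!({\rm D}_X F \boxtimes F) \simeq \bR\Gamma_c(X; {\rm D}_X F) \otimes \bR\Gamma(X;F) \simeq V^* \otimes V$ via Verdier duality. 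The main effort is then to check that the coevaluation and evaluation morphisms produced by Lemma \ref{lem:gtkcc} correspond, under this isomorphism, to the universal duality pairing of $V^* \otimes V$; this is essentially the functoriality of the duality pairing under proper direct image. Once this compatibility is in place, the explicit formula $\Le(V^* \otimes V) = \tr(g)$ from the ``over a point'' discussion concludes the argument.
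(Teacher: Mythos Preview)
Your proposal is correct and follows exactly the route the paper takes: the corollary is obtained by specialising Proposition~\ref{prop:lcdirect} to $Y=\pt$, $\phi_Y=\id_\pt$, $f=\mathbf{a}$, and then identifying $\mu\Le(\bR\mathbf{a}_*F,\bR\mathbf{a}_*\Phi,\id_\pt)$ with $\tr(F,\Phi)$ via the discussion of graph trace kernels over a point. Your additional unpacking of the isomorphism $\TK_{\id_\pt}(\bR\mathbf{a}_*F,\bR\mathbf{a}_*\Phi)\simeq\bR\widetilde{\mathbf{a}}_!\,\TK_{\phi_X}(F,\Phi)$ and the compatibility of the (co)evaluation morphisms is a faithful elaboration of what the paper leaves implicit.
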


%% Put all acknowledgements (including those concerning grants) at the end.

\subsection*{Acknowledgments}
The author is grateful to Professor Pierre Schapira for helpful advice.
This work was supported by JSPS KAKENHI Grant Number 15J07993 and 
the Program for Leading Graduate Schools, MEXT, Japan.

\vspace{5mm}

\noindent Graduate School of Mathematical Sciences, the University of Tokyo, Komaba, 
Tokyo, 153-8914, Japan\\
E-mail address: \texttt{ike@ms.u-tokyo.ac.jp}


\begin{thebibliography}{ScSn94}
\addcontentsline{toc}{section}{References}

\bibitem[Gu96]{Gu96} S. Guillermou,
 Lefschetz class of elliptic pairs, 
Duke Math.\ J. \textbf{85} (1996), no.~2, 273--314.

\bibitem[IMT15]{IMT15}
Y. Ike, Y. Matsui and K. Takeuchi,
Hyperbolic localization and Lefschetz fixed point formulas 
for higher-dimensional fixed point sets, preprint 
\texttt{arXiv:math/1504.04185}.

\bibitem[KS90]{KS90} M. Kashiwara and P. Schapira,
 \textit{Sheaves on Manifolds}, 
Grundlehren der Math.\ Wiss. 292, 
Springer-Verlag, Berlin, 1990.

\bibitem[KS14]{KS14}
M. Kashiwara and P. Schapira,
Microlocal Euler classes and Hochschild homology,
J. Inst. Math. Jussieu, \textbf{13} (2014), no.~3, 487-516.

\bibitem[MT10]{MT10} Y. Matsui and K. Takeuchi, 
Microlocal study of Lefschetz fixed-point formulas 
for higher-dimensional fixed point sets, 
Int.\ Math.\ Res.\ Not.\ IMRN, (2010), no.~5, 882--913.

\bibitem[RTT13]{RTT13}
A. C. Ramadoss, X. Tang, and H.-H. Tseng,
Hochschild Lefschetz class for $D$-modules,
Math. Z. \textbf{275} (2013), no.~1-2, 367-388.

\bibitem[Sc14]{Sc14} P. Schapira, 
Private communications.

\bibitem[ScSn94]{ScSn94}P. Schapira and J-P. Schneiders, 
\textit{Index Theorem for Elliptic Pairs},
 Ast\'erisque Soc. Math. France. \textbf{224}, 1994.

\end{thebibliography}
\end{document}